\documentclass[10pt, reqno]{amsart}


\usepackage{amssymb}
\usepackage{enumitem}
\usepackage{hyperref}
\usepackage[noabbrev]{cleveref}
\usepackage{mathtools}

\usepackage[backend=biber, doi=false, giveninits=true, isbn=false, sortcites=true,
style=numeric-comp, url=false, sorting=nyt]{biblatex} 
\renewbibmacro{in:}{}  
\addbibresource{main.bib}


\allowdisplaybreaks

\numberwithin{equation}{section}



    \renewcommand{\epsilon}{\varepsilon}
    \renewcommand{\phi}{\varphi}

    \renewcommand{\tilde}{\widetilde}
    \renewcommand{\hat}{\widehat}

    \newcommand{\defeq}{\vcentcolon=}
    \newcommand{\eqdef}{=\vcentcolon}


    \newcommand{\set}[1]{\{ #1 \}}

    \newcommand{\R}{\mathbb{R}}


    \newcommand{\abs}[1]{\lvert #1 \rvert}
    \newcommand{\bigabs}[1]{\bigl\lvert #1 \bigr\rvert}
    \newcommand{\Abs}[1]{\left\lvert #1 \right\rvert}
    
    \newcommand{\biginner}[2]{\bigl\langle #1, #2 \bigr\rangle}
    \newcommand{\Inner}[2]{\left\langle #1, #2 \right\rangle}
    \newcommand{\norm}[1]{\lVert #1 \rVert}
    \newcommand{\Norm}[1]{\left\lVert #1 \right\rVert}


    \newcommand{\les}{\lesssim}

    \newcommand{\grad}{\nabla}
    \newcommand{\lap}{\Delta}

    \newcommand{\bigmes}{\bigabs}

    \newcommand{\bigO}[2][]{\mathcal{O}_{#1}(#2)}
    \newcommand{\BigO}[2][]{\mathcal{O}_{#1}{\left(#2\right)}}
    \newcommand{\lilo}[2][]{o_{#1}(#2)}


    \newcommand{\wprop}{\mathcal{U}}
    \newcommand{\wop}[1]{W_{\mkern-5mu \vphantom{\tilde{F}} #1}}
    \newcommand{\sop}[1]{S_{\mkern-3mu \vphantom{\tilde{F}} #1}}
    \newcommand{\dpt}[1]{D_{\mkern-3mu \vphantom{\tilde{F}} #1}}


\newtheorem{theorem}{Theorem}[section]
\newtheorem{proposition}[theorem]{Proposition}
\newtheorem{lemma}[theorem]{Lemma}
\newtheorem{corollary}[theorem]{Corollary}

\theoremstyle{definition}
\newtheorem{definition}[theorem]{Definition}

\theoremstyle{remark}

\begin{document}

\title[Deconvolutional determination of the nonlinearity]{Deconvolutional determination of
the nonlinearity in a semilinear wave equation}

\author[N.~Hu]{Nicholas Hu}
\address{Department of Mathematics, UCLA}
\email{njhu@math.ucla.edu}

\author[R.~Killip]{Rowan Killip}
\address{Department of Mathematics, UCLA}
\email{killip@math.ucla.edu}

\author[M.~Vișan]{Monica Vișan}
\address{Department of Mathematics, UCLA}
\email{visan@math.ucla.edu}

\maketitle

\begin{abstract} 
    We demonstrate that in three space dimensions, the scattering behaviour of semilinear
    wave equations with quintic-type nonlinearities uniquely determines the nonlinearity.
    The nonlinearity is permitted to depend on both space and time.
\end{abstract}

\section{Introduction}

We consider the semilinear wave equation
\begin{equation} \label{nlw}
    \begin{cases}
        (\partial_{tt} - \lap_x) u(t, x) = F(t, x, u(t, x)), & (t, x) \in \R \times \R^3; \\
        u(0,{}\cdot{}) = u_0; \\
        \partial_t u(0,{}\cdot{}) = u_1.
    \end{cases}
\end{equation}
Under mild assumptions on the nonlinearity $F : \R \times \R^3 \times \R \to \R$, we show
that this equation admits a small-data scattering theory and that the scattering operator
\emph{determines} the nonlinearity.  The specific class of nonlinearities we consider is
given in Definition~\ref{D:admissible} and may be regarded as a generalization of the
energy-critical case.  The main inspiration for the problem we study is the paper
\cite{SaBarreto} of S\'a Barreto, Uhlmann, and Wang.  Our methods, however, are more
strongly influenced by Killip, Murphy, and Vișan \cite{Killip}.

The requirements that we impose on the nonlinearity are as follows:

\begin{definition}[Admissible nonlinearity] \label{D:admissible}
    A measurable function $F : \R \times \R^3 \times \R \to \R$ will be called
    \emph{admissible} for \cref{nlw} if
    \begin{enumerate}[label=(\roman*)]
        \item
            $F(t, x, 0) = 0$ for all $t, x$;
        \item
            $\abs{F(t, x, u) - F(t, x, v)} \les (\abs{u}^4 + \abs{v}^4) \abs{u - v}$ for all
            $u, v$ uniformly in $t, x$; and \item $F(t, x, -u) = -F(t, x, u)$ for all $t,
            x$.
    \end{enumerate}
\end{definition}

If $F(t,x,u)=\pm |u|^4u$, the resulting equation is the defocusing/focusing (depending on
the sign of the nonlinearity) energy-critical wave equation.  This name reflects the fact
that in this case, the equation enjoys a scaling symmetry $$ u(t,x)\mapsto u^\lambda(t,x) =
\lambda ^{\frac12} u\bigl(\lambda t, \lambda x\bigr) \qquad \text{for $\lambda>0$} $$ that
preserves the energy of solutions $$ E(u)= \int_{\R^3} \tfrac12 |\nabla u(t,x)|^2 +\tfrac12
|\partial_t u(t,x)|^2 \pm \tfrac 16|u(t,x)|^6\, dx.  $$ Accordingly, we will be studying
equation \eqref{nlw} with initial data $(u_0,u_1)$ in the energy space $\dot H^1(\R^3)\times
L^2(\R^3)$.

\begin{definition}[Solution] \label{D:solution}
    A function $u : \R \times \R^3 \to \R$ is said to be a \emph{strong 
    global solution} of \cref{nlw} if $(u, \partial_t u) \in C^0_t \dot{H}^1_x (\R \times
    \R^3) \times C^0_t L^2_x (\R \times \R^3)$, $u \in L^5_t L^{10}_x (K \times \R^3)$ for
    all compact sets $K \subseteq \R$, and $u$ satisfies the Duhamel formula
    \[
        \begin{bmatrix} u(t) \\ \partial_t u(t) \end{bmatrix} = 
        \wprop(t) \begin{bmatrix} u_0 \\ u_1 \end{bmatrix} + 
        \int_0^t \wprop(t-s) \begin{bmatrix} 0 \\ F(s) \end{bmatrix} \, ds.
    \]
Here $\wprop$ denotes the propagator for the linear wave equation, that is,
    {\renewcommand{\arraystretch}{1.5}
        \[
            \wprop(t) \defeq 
            \begin{bmatrix} 
                \cos(t\abs{\grad}) & \dfrac{\sin(t\abs{\grad})}{\abs{\grad}} \\
                -\abs{\grad} \sin(t\abs{\grad}) & \cos(t\abs{\grad}) 
            \end{bmatrix}.
        \]
    }%
    Here and in what follows we abbreviate $u(t, {}\cdot{})$ as $u(t)$ and $F(t, {}\cdot{},
    u(t))$ as $F(t)$.
\end{definition}

For admissible nonlinearities, equation \eqref{nlw} admits a small-data global
well-posedness and scattering theory.

\begin{theorem}[Small-data scattering] \label{T:scattering}
    Let $F$ be an admissible nonlinearity for \cref{nlw}.
    Then there exists an $\eta > 0$ such that \cref{nlw} has a unique global solution $u$
    satisfying
    \begin{equation} \label{soln-estimate}
        \norm{(u, \partial_t u)}_{L^\infty_t \dot{H}^1_x \times L^\infty_t L^2_x} +
        \norm{u}_{L^5_t L^{10}_x} 
        \les \norm{(u_0, u_1)}_{\dot{H}^1 \times L^2}
    \end{equation}
    whenever $(u_0, u_1) \in B_\eta$, where
    \[
        B_\eta \defeq 
        \set{(u_0, u_1) \in \dot{H}^1(\R^3) \times L^2(\R^3) : 
        \norm{(u_0, u_1)}_{\dot{H}^1 \times L^2} < \eta}.
    \]

    This solution scatters in $\dot{H}^1(\R^3) \times L^2(\R^3)$ as $t \to \pm \infty$, 
    meaning that there exist
    \textup(necessarily unique\textup) asymptotic states 
    $(u^\pm_0, u^\pm_1) \in \dot{H}^1(\R^3) \times L^2(\R^3)$ for which
    \begin{equation} \label{scattering}
        \Norm{\begin{bmatrix} u(t) \\ \partial_t u(t) \end{bmatrix} -
            \wprop(t) \begin{bmatrix} u^\pm_0 \\ u^\pm_1 \end{bmatrix}}_{\dot{H}^1 \times
        L^2}
        \to 0
        \quad
        \text{as $t \to \pm \infty$}.
    \end{equation}

    In addition, for all $(u^-_0, u^-_1) \in B_\eta$, there exists a unique global
    solution $u$ to \cref{nlw} and a unique asymptotic state $(u^+_0, u^+_1) \in
    \dot{H}^1(\R^3) \times L^2(\R^3)$ so that both limits in \labelcref{scattering} hold.
\end{theorem}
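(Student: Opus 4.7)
My plan is the standard small-data contraction argument based on the energy Strichartz estimate for the three-dimensional wave equation,
\[
    \|u\|_{L^5_t L^{10}_x} + \|(u, \partial_t u)\|_{L^\infty_t(\dot H^1_x \times L^2_x)}
    \lesssim \|(u_0, u_1)\|_{\dot H^1 \times L^2} + \|F\|_{L^1_t L^2_x},
\]
valid for any $u$ solving $(\partial_{tt} - \lap_x) u = F$ with data $(u_0, u_1)$; the pair $(5,10)$ is wave-admissible at regularity one. Properties (i) and (ii) of Definition \ref{D:admissible} yield the pointwise bound $|F(t,x,u)| \lesssim |u|^5$, so H\"older's inequality gives
\[
    \|F(\cdot,\cdot,u)\|_{L^1_t L^2_x} \lesssim \|u\|_{L^5_t L^{10}_x}^5
    \quad\text{and}\quad
    \|F(\cdot,\cdot,u) - F(\cdot,\cdot,v)\|_{L^1_t L^2_x} \lesssim \bigl(\|u\|_{L^5_t L^{10}_x}^4 + \|v\|_{L^5_t L^{10}_x}^4\bigr)\|u-v\|_{L^5_t L^{10}_x}.
\]

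With these in hand, I would apply the Banach fixed-point theorem to the Duhamel map of Definition \ref{D:solution} on the ball $\{u \in L^5_t L^{10}_x(\R \times \R^3) : \|u\|_{L^5_t L^{10}_x} \leq C\eta\}$, with $C$ a suitable Strichartz constant and $\eta$ sufficiently small. The first display above gives self-mapping and the second contraction, producing the unique fixed point $u$; the bound \eqref{soln-estimate} and the continuity $(u, \partial_t u) \in C^0_t(\dot H^1 \times L^2)$ both fall out of a further application of Strichartz. Uniqueness in the entire class of strong global solutions with data in $B_\eta$ follows by a routine continuity bootstrap: on any initial subinterval the competing solution has small $L^5_t L^{10}_x$ norm by absolute continuity, so it agrees with the fixed-point solution there, and the argument then extends to all of $\R$.

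For scattering, applying $\wprop(-t)$ to the Duhamel formula and using that $\wprop$ is an isometry on $\dot H^1 \times L^2$ shows that the integrand $\wprop(-s)(0, F(s))$ lies in $L^1_s(\R; \dot H^1 \times L^2)$ with norm bounded by $\|u\|_{L^5_t L^{10}_x}^5$; hence the limits
\[
    \begin{bmatrix} u_0^\pm \\ u_1^\pm \end{bmatrix}
    = \begin{bmatrix} u_0 \\ u_1 \end{bmatrix} + \int_0^{\pm\infty} \wprop(-s) \begin{bmatrix} 0 \\ F(s) \end{bmatrix} ds
\]
exist in the energy space and verify \eqref{scattering}. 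For the wave operator, I would run an analogous fixed-point argument on the ansatz $u(t) = \wprop(t)(u_0^-, u_1^-) + \int_{-\infty}^t \wprop(t-s)(0, F(s, u(s)))\, ds$, which builds in the $t\to-\infty$ asymptotic; the $t \to +\infty$ asymptotic is then extracted as above.

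I do not anticipate any genuine obstacle: admissibility is designed precisely so that the nonlinear Strichartz estimates reduce to H\"older, exactly as for $F = \pm|u|^4 u$. The mildest subtlety is the uniqueness upgrade noted above, which is standard.
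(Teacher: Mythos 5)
Your proposal is correct and follows essentially the same route as the paper: a contraction mapping argument for the Duhamel map driven by the energy-level Strichartz estimate with the pair $L^5_tL^{10}_x$, the pointwise bound $|F|\lesssim|u|^5$ from admissibility, and scattering via the absolute convergence of $\int \wprop(-s)(0,F(s))\,ds$, with the backward-in-time Duhamel ansatz for the final assertion. The only cosmetic difference is that you contract on a ball in $L^5_tL^{10}_x$ alone and recover the energy norms afterward, whereas the paper contracts in the combined metric; both are standard and equivalent here.
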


The map $(u_0, u_1) \mapsto (u^+_0, u^+_1)$ defined implicitly by \Cref{T:scattering} on the
open ball $B_\eta \subseteq \dot{H}^1(\R^3) \times L^2(\R^3)$ is the inverse of what is
often called the \emph{forward wave operator}; in this paper, we will refer to it simply as
the \emph{wave operator} and we will denote it by $\wop{F}$.  The map $(u^-_0, u^-_1)
\mapsto (u^+_0, u^+_1)$ is the \emph{scattering operator} and will be denoted $\sop{F}$.
Our principal result is that either operator determines the nonlinearity completely.

Our hypotheses on the nonlinearity $F$ do not demand any continuity in $t$ or $x$.  Avoiding
such a restriction is important for us as we wish to allow nonlinearities of the form
$1_\Omega (x) u^5$, which model a nonlinear medium (whose shape we wish to determine)
surrounded by vacuum.

Without a continuity requirement, complete determination of the nonlinearity means
determination at (Lebesgue) almost every spacetime point.  We can be very precise about the
spacetime points at which we determine the nonlinearity:

\begin{definition}[Determinable point]
    Suppose that $F$ is an admissible nonlinearity for \cref{nlw}. A point $(t, x) \in \R
    \times \R^3$ will be called \emph{determinable} for $F$ if it is a Lebesgue point of
    $F({}\cdot{}, {}\cdot{}, u)$ for every rational $u$. The set of all such points will be
    denoted $\dpt{F}$.
\end{definition}

For each fixed $u$, the map $(t,x)\mapsto F(t,x,u)$ is bounded and measurable and so almost
every point is a Lebesgue point.  The countability of the rational numbers then guarantees
that almost every spacetime point is determinable.  

\begin{theorem} \label{T:main}
    Suppose that $F$ and $\tilde{F}$ are admissible nonlinearities for \cref{nlw} and that
    $B_\eta$ and $B_{\tilde{\eta}}$ are corresponding balls given by \Cref{T:scattering}.
    If $\wop{F}$ and $\wop{\tilde{F}}$, or $\sop{F}$ and $\sop{\tilde{F}}$, agree on $B_\eta
    \cap B_{\tilde{\eta}}$ \textup(that is, the smaller of the two balls\textup), then $F(t,
    x, {}\cdot{}) = \tilde{F}(t, x, {}\cdot{})$ for all $(t, x) \in \dpt{F} \cap
    \dpt{\tilde{F}}$. 
\end{theorem}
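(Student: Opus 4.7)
My plan is to convert the coincidence of the wave (or scattering) operators into an integral identity via Duhamel, linearize it using the small-data Strichartz theory, pair it with an arbitrary linear test wave, and then localize the pairing at a single spacetime point through a wave-packet construction that exploits $3$D focusing; the pointwise value of $F-\tilde F$ then appears by Lebesgue differentiation.

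Fix $(u_0,u_1)\in B_\eta\cap B_{\tilde\eta}$, and let $u,\tilde u$ be the NLW solutions associated to $F,\tilde F$ furnished by \Cref{T:scattering}. Subtracting the Duhamel formulas and using the coincidence of forward asymptotic states yields
\[
\int_0^\infty \wprop(-s)\begin{bmatrix}0\\ F(s,\cdot,u(s))-\tilde F(s,\cdot,\tilde u(s))\end{bmatrix}ds=0,
\]
with the analogous identity over $\R$ when the scattering operators agree. Using \labelcref{soln-estimate} together with the quintic-Lipschitz bound of \Cref{D:admissible}, a Strichartz iteration gives $\norm{u-\phi}_{L^5_tL^{10}_x}+\norm{\tilde u-\phi}_{L^5_tL^{10}_x}\les\norm{\phi}_{L^5_tL^{10}_x}^5$, where $\phi(s):=\wprop(s)(u_0,u_1)^T$; consequently $\norm{F(\cdot,\cdot,u)-F(\cdot,\cdot,\phi)}_{L^1_tL^2_x}\les\norm{\phi}_{L^5_tL^{10}_x}^9$ and similarly for $\tilde F$. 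Pairing with $(\chi_0,\chi_1)\in\dot H^{-1}\times L^2$ and exploiting the $L^2$-self-adjointness of $\cos(s\abs{\grad})$ and $\frac{\sin(s\abs{\grad})}{\abs{\grad}}$ produces
\[
\int_0^\infty\!\!\int_{\R^3}\psi(s,x)\bigl[F-\tilde F\bigr](s,x,\phi(s,x))\,dx\,ds=\BigO{\norm{\phi}_{L^5L^{10}}^9\,\norm{(\chi_0,\chi_1)}_{\dot H^{-1}\times L^2}},
\]
where $\psi(s)=\cos(s\abs{\grad})\chi_1-\frac{\sin(s\abs{\grad})}{\abs{\grad}}\chi_0$ is the linear wave solution with data $(\chi_1,-\chi_0)$.

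To extract $[F-\tilde F](t_0,x_0,v_0)$ at a determinable point $(t_0,x_0)$ (with $t_0\neq 0$, say) and a target $v_0\in\R$, I build linear wave solutions $\phi_\delta,\psi_\delta$ focused at $(t_0,x_0)$ on scale $\delta$. For $\phi_\delta$, I prescribe initial data on a shell of radius $\abs{t_0}$ and thickness $\delta$ around $x_0$; strong Huygens' focusing in $3$D then yields $\phi_\delta\approx v_0$ throughout a spacetime ball $B_\delta(t_0,x_0)$, with $\norm{\phi_\delta}_{L^5L^{10}}\les v_0\sqrt{\delta}$ (well inside $B_\eta\cap B_{\tilde\eta}$ for small $\delta$). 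For $\psi_\delta$, a time-reversed focusing construction gives $\int\psi_\delta\,dx\,ds\approx 1$ concentrated on $B_\delta$, with $\norm{(\chi_0,\chi_1)}_{\dot H^{-1}\times L^2}\les\delta^{-5/2}$. The error term is then $\BigO{v_0^9\delta^2}$, which vanishes with $\delta$. On the main term, the quintic-Lipschitz bound replaces $\phi_\delta(s,x)$ inside $F-\tilde F$ by the constant $v_0$ up to $o(1)$ correction, and the Lebesgue-point hypothesis at $(t_0,x_0)$ (extended to real $v_0$ by density and the Lipschitz bound) yields that the pairing converges to $[F-\tilde F](t_0,x_0,v_0)$. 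Sending $\delta\to 0$ forces this to vanish.

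The principal obstacle is the wave-packet construction in the final step: one needs $\phi_\delta,\psi_\delta$ to simultaneously concentrate at $(t_0,x_0)$ with precisely the right amplitude-versus-scale balance so that the quadratic error term remains subdominant to the principal contribution. The quintic nonlinearity is energy-critical in $3$D, so the scaling is tight. Moreover, $\phi_\delta$ and $\psi_\delta$ cannot be truly localized — they propagate along light cones through $(t_0,x_0)$ — and one must verify that the contributions of these tails, controlled by the quintic decay $\abs{F(u)}\les\abs{u}^5$ and the $1/\abs{t-t_0}$ spherical defocusing law, are indeed negligible relative to the focused contribution at $(t_0,x_0)$.
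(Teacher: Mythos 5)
Your reduction to the Born approximation and the duality pairing with a linear test wave is sound and matches the paper's Corollary~\ref{C:asymp}. The proof breaks down, however, at exactly the step you flag as ``the principal obstacle'': the claim that the light-cone tails of $\phi_\delta$ and $\psi_\delta$ are negligible relative to the focal contribution is false, and a scaling computation shows why. At temporal distance $\rho=\abs{s-t_0}$ from the focus, both waves live on a shell of radius $\approx\rho$ and thickness $\approx\delta$ (volume $\approx\rho^2\delta$), with $\abs{\phi_\delta}\approx v_0\delta/\rho$ and $\abs{\psi_\delta}\approx\delta^{-4}\cdot\delta/\rho$. Hence the tail of the pairing is controlled from above \emph{and potentially saturated from below} by
\[
\int_\delta^{O(1)}\bigl(\delta^{-3}\rho^{-1}\bigr)\bigl(v_0\delta\rho^{-1}\bigr)^5\rho^2\delta\,d\rho
\;\approx\; v_0^5\delta^3\int_\delta^{O(1)}\rho^{-4}\,d\rho\;\approx\; v_0^5,
\]
which is exactly the size of your main term $\delta^{-4}\cdot v_0^5\cdot\delta^4=v_0^5$ and does not vanish as $\delta\to0$. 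This is the energy-criticality of the quintic power in $3$D: every dyadic shell $\rho\approx 2^k\delta$ contributes equally, and on those shells $\phi_\delta$ sweeps through \emph{all} amplitudes in $(0,v_0)$. Consequently the functional you compute is not $[F-\tilde F](t_0,x_0,v_0)$ but an irreducible average of $u\,[F-\tilde F](t_0,x_0,u)$ over the amplitudes attained by the linear solution — the ``blurring'' that the paper's introduction warns about. No choice of co-focused radial wave packets escapes this, because $\psi_\delta$ is itself a wave and decays only like $1/\rho$ along the same cone.

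The paper's proof accepts this averaging rather than fighting it: it computes the distribution function of the focusing linear solution exactly (Lemma~\ref{L:measures}), recognizes the amplitude average as a multiplicative convolution $H*w=\tilde H*w$ in the variable $\tau=\log\lambda$ (Proposition~\ref{P:conv}, with the spacetime localization at a determinable point handled by Lemma~\ref{L:leb-diff}, which plays the role of your Lebesgue-point step), and then inverts the convolution by Wiener's Tauberian theorem (Theorem~\ref{T:wiener}), which requires proving that $\hat w$ has no real zeroes (Proposition~\ref{P:w-ft}). These deconvolution ingredients are the mathematical content that replaces your unjustified tail estimate, and none of them can be omitted. To repair your argument you would need either to prove the nonvanishing of the Fourier transform of the relevant amplitude-averaging kernel (reproducing the paper), or to exhibit test waves for which the tail genuinely is subcritical — which the scaling above rules out for the constructions you propose.
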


The question of whether the nonlinearity in a dispersive PDE is determined by its scattering
behaviour has been extensively studied \cite{SaBarreto, Carles, Morawetz, Pausader, Sasaki,
Sasaki2, Sasaki3, Sasaki4, Watanabe, Watanabe2, Watanabe3, Watanabe4, Weder, Weder2, Weder3,
Weder4, Weder5, Weder6}. Usually, rather strong assumptions are imposed on the nonlinearity
in order to obtain a positive answer.

In contrast, Killip, Murphy, and Vișan's deconvolution-based approach \cite{Killip} enabled
them to determine power-type nonlinearities in a semilinear Schr\"odinger equation with only
moderate growth restrictions on the nonlinearities.  Their approach is flexible and
technically simple, as demonstrated by its subsequent application to the determination of
coefficients \cite{Murphy, Hogan} and inhomogeneities \cite{Chen, Chen2} of nonlinear
Schr\"odinger equations.

In this paper, we revisit the setting considered by S\'a Barreto, Uhlmann, and Wang
\cite{SaBarreto}, who determined nonlinearities of the form $F = F(u)$ in
\cref{nlw} under the following assumptions:
\begin{enumerate}[label=(\roman*)]
    \item
        $F(u) = h(u) u$ for some even function $h$ satisfying $\abs{h(u)} \approx
        \abs{u}^4$ for all $u$;

    \item
        $F'(u) u \sim F(u)$ as $u \to 0$ and as $u \to \pm \infty$;

    \item
        $u \mapsto \int_0^u F(v) \, dv$ is convex;

    \item
        $\abs{F^{(j)}(u)} \les \abs{u}^{5-j}$ for each $0 \leq j \leq 5$; and

    \item
        $F^{(4)}(u) = 0$ if and only if $u = 0$.
\end{enumerate}

By adapting the deconvolution technique of \cite{Killip} to the setting of the wave
equation, we will prove that even more general nonlinearities of the form $F = F(t, x, u)$
can be determined under the weaker conditions of \Cref{D:admissible}.

Let us now turn to an overview of the paper, the method of \cite{Killip}, and the principal
challenges to be overcome in applying it in the wave equation setting.

Our first task is to establish the existence, uniqueness, and long-time behaviour of
solutions to \eqref{nlw} for small initial data and for admissible nonlinearities.  This is
Theorem~\ref{T:scattering}, which we prove in Section~\ref{S:scattering}.

Following \cite{Killip}, our approach to identifying the nonlinearity is through the
small-data asymptotics of the scattering and wave operators.  These are presented in
Corollary~\ref{C:asymp}, which gives a precise estimate on the difference between the full
operators and what is known as their \emph{Born approximation}.

Under the Born approximation, the scattering/wave operators capture the spacetime integral
of $u(t,x)F(t,x,u(t,x))$, where $u(t,x)$ is a solution of the \emph{linear} wave equation.
This evidently represents a substantial `blurring' of the nonlinearity across different
values of $t$, $x$, and $u$.  If the nonlinearity did not depend on $t$ and $x$, then this
blurring would take the form of a convolution (over the multiplication group).  By switching
to exponential variables, this then would become a convolution in the traditional sense.  In
this way, the question of identifying the nonlinearity reduces to a deconvolution problem.
As we will discuss in Section~\ref{S:deconvolution}, the uniqueness criterion for such
deconvolution problems is the well-known $L^1$ Tauberian theorem of Wiener; see
Theorem~\ref{T:wiener}.   

To overcome the dependence of the nonlinearity on space and time, we will employ a solution
of the linear wave equation that concentrates tightly at a single point in spacetime (while
also remaining small in scaling-critical norms).  As noted earlier, we do not assume that
the nonlinearity is continuous in $t$ or $x$; consequently, there are some subtleties to be
overcome in localizing the nonlinearity to a single spacetime point.  This is the role of
Lemma~\ref{L:leb-diff}.  With this hurdle overcome, the uniqueness question is reduced to
the deconvolution problem presented in Proposition~\ref{P:conv}.

We now arrive at the crux of the matter: we need to find solutions to the linear wave
equation that lead to a deconvolution problem that can actually be solved.   Concretely, we
must find a linear solution whose distribution function we can compute sufficiently
explicitly that we will be able to verify the hypotheses of Wiener's Tauberian theorem.  The
distribution function for the solution we choose is computed in Lemma~\ref{L:measures}.
Although we are unable to compute the resulting Fourier transform precisely, we are
nonetheless able to verify that it is nonvanishing (see Proposition~\ref{P:w-ft}) and
consequently to apply the Tauberian theorem.

\hypersetup{bookmarksdepth=-2}  
\subsection*{Acknowledgements} R.K. was supported by NSF grant DMS-2154022; M.V. was
supported by NSF grant DMS-2054194.
\hypersetup{bookmarksdepth}

\subsection{Notation}
Throughout this paper, we employ the standard notation $A \les B$ to indicate that $A \leq
CB$ for some constant $C > 0$; if $A \les B$ and $B \les A$, we write $A \approx B$.
Occasionally, we adjoin subscripts to this notation to indicate dependence of the constant
$C$ on other parameters; for instance, we write $A \les_{\alpha, \beta} B$ when $A \leq
CB$ for some constant $C > 0$ depending on $\alpha, \beta$.

\section{Small-data scattering} \label{S:scattering}

We begin by establishing the small-data scattering theory described in \Cref{T:scattering}.
This relies on a standard contraction mapping argument using Strichartz estimates. 

\begin{theorem}[Strichartz estimates, \cite{Pecher, Strichartz, Keel}]
    If $u : \R \times \R^3 \to \R$ is a global solution of \cref{nlw}, then
    \[
        \norm{(u, \partial_t u)}_{L^\infty_t \dot{H}^1_x \times L^\infty_t L^2_x} +
        \norm{u}_{L^5_t L^{10}_x} \les
        \norm{(u_0, u_1)}_{\dot{H}^1 \times L^2} + \norm{F(t, x, u(t, x))}_{L^1_t L^2_x}.
    \]
\end{theorem}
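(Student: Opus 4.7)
This is a standard application of Strichartz estimates via Duhamel's formula; there is no substantive obstacle. My plan is to decompose $u$ using the Duhamel representation from \Cref{D:solution},
\[
    \begin{bmatrix} u(t) \\ \partial_t u(t) \end{bmatrix}
    = \wprop(t) \begin{bmatrix} u_0 \\ u_1 \end{bmatrix}
    + \int_0^t \wprop(t-s) \begin{bmatrix} 0 \\ F(s) \end{bmatrix} ds,
\]
and then to estimate the two terms on the right separately via the triangle inequality.

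For the homogeneous piece, the $L^\infty_t \dot H^1_x \times L^\infty_t L^2_x$ bound is simply conservation of energy for the free wave equation, while the $L^5_t L^{10}_x$ bound is the classical homogeneous Strichartz inequality of Pecher, Strichartz, and Keel--Tao. The pair $(q, r) = (5, 10)$ is wave-admissible at the $\dot H^1$ scaling level in three spatial dimensions, since $\tfrac{1}{5} + \tfrac{3}{10} = \tfrac{1}{2}$ and $\tfrac{3}{2} - \tfrac{3}{10} - \tfrac{1}{5} = 1$.

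For the inhomogeneous piece, the $L^\infty_t \dot H^1_x \times L^\infty_t L^2_x$ estimate follows from the energy identity for the wave equation with zero initial data and forcing $F \in L^1_t L^2_x$ (equivalently, Minkowski on the Duhamel integral together with the operator bound $\norm{\sin(t \abs{\grad})/\abs{\grad}}_{L^2 \to \dot H^1} \les 1$ uniformly in $t$). The corresponding $L^5_t L^{10}_x$ estimate with forcing in the dual endpoint $L^1_t L^2_x$ is the inhomogeneous Strichartz inequality, which is obtained from the homogeneous estimate by a $TT^\ast$ argument, with a Christ--Kiselev step to restrict the time integration to $s < t$. Summing the two contributions yields the claim. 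The only genuine arithmetic content is the admissibility check for $(5, 10)$; beyond that, the estimate is a black-box consequence of the cited Strichartz theory.
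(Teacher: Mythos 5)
Your sketch is correct, and it matches the paper's treatment in spirit: the paper does not prove this theorem at all but simply cites Pecher, Strichartz, and Keel--Tao, treating it as a black box exactly as you do. Your admissibility arithmetic for $(q,r)=(5,10)$ at the $\dot H^1$ level in three dimensions is right. One small simplification: since the forcing is measured in $L^1_t L^2_x$ (dual of the energy pair), no Christ--Kiselev step is needed for the retarded integral --- Minkowski's inequality in $s$ combined with the homogeneous estimate applied to data $(0,F(s))$ already handles the restriction to $s<t$; Christ--Kiselev is only required when the forcing exponent is a nontrivial dual Strichartz pair.
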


The contraction mapping argument constructs the solution from the Duhamel formula
\begin{equation} \label{duhamel}
    \begin{bmatrix} u(t) \\ \partial_t u(t) \end{bmatrix} = 
    \wprop(t) \begin{bmatrix} u_0 \\ u_1 \end{bmatrix} + 
    \int_0^t \wprop(t-s) \begin{bmatrix} 0 \\ F(s) \end{bmatrix} \, ds.
\end{equation}
Similarly, the solution with prescribed asymptotic state $(u_0^-, u_1^-)$ as $t\to -\infty$
is constructed from the formula
\begin{equation} \label{duhamel-scattering}
    \begin{bmatrix} u(t) \\ \partial_t u(t) \end{bmatrix} = 
    \wprop(t) \begin{bmatrix} u^-_0 \\ u^-_1 \end{bmatrix} + 
    \int_{-\infty}^t \wprop(t-s) \begin{bmatrix} 0 \\ F(s) \end{bmatrix} \, ds.
\end{equation}

\begin{proof}[Proof of \Cref{T:scattering}]
    Let
    \begin{align*}
        X &\defeq \big\{u : \R \times \R^3 \to \R : \\
          &\mathrel{\hphantom{\defeq}}\hphantom{\{}
              (u, \partial_t u) \in C^0_t \dot{H}^1_x(\R \times \R^3) \times C^0_t L^2_x(\R
              \times \R^3),\,
              u \in L^5_t L^{10}_x(\R \times \R^3), \\
          &\mathrel{\hphantom{\defeq}}\hphantom{\{}
              \norm{(u, \partial_t u)}_{L^\infty_t \dot{H}^1_x \times L^\infty_t L^2_x} +
              \norm{u}_{L^5_t L^{10}_x} \leq
              2C \norm{(u_0, u_1)}_{\dot{H}^1 \times L^2}\big\},
   \end{align*}
where $C$ is the implicit constant in the Strichartz estimates. Equipping $X$ with the
metric
$$
d(u, v) \defeq \norm{(u, \partial_t u) - (v, \partial_t v)}_{L^\infty_t \dot{H}^1_x \times
        L^\infty_t L^2_x} +\norm{u-v}_{L^5_t L^{10}_x}\,,
$$        
we obtain a nonempty complete metric space $(X, d)$.

    For $u \in X$, we then define
    \[
        (\Phi(u))(t) \defeq 
        \cos(t\abs{\grad}) u_0 + 
        \frac{\sin(t\abs{\grad})}{\abs{\grad}} u_1 + 
        \int_0^t \frac{\sin((t-s)\abs{\grad})}{\abs{\grad}} F(s) \, ds
    \]
    so that
        \begin{equation} \label{duhamel-phi}
        \begin{bmatrix} (\Phi(u))(t) \\ (\partial_t \Phi(u))(t) \end{bmatrix} = 
        \wprop(t) \begin{bmatrix} u_0 \\ u_1 \end{bmatrix} + 
        \int_0^t \wprop(t-s) \begin{bmatrix} 0 \\ F(s) \end{bmatrix} \, ds.
    \end{equation}
    To construct the solution of \cref{nlw}, we will show that $\Phi$ is a contraction on
    $(X, d)$ whenever $(u_0, u_1) \in B_\eta$ and $\eta$ is sufficiently small.  The
    solution sought will then be the fixed point of $\Phi$ whose existence and uniqueness
    are guaranteed by the Banach fixed point theorem.

    We first verify that $\Phi$ maps $X$ into itself.  Let $C_F$ be a constant such that
    $\abs{F(t, x, u)} \leq C_F \abs{u}^5$ for all $(t, x) \in \R \times \R^3$.  If $u \in
    X$, then by the Strichartz estimates, we have
    \begin{align*}
        &\norm{(\Phi(u), \partial_t \Phi(u))}_{L^\infty_t \dot{H}^1_x \times L^\infty_t
        L^2_x} + \norm{\Phi(u)}_{L^5_t L^{10}_x} \\
        &\quad\leq C(\norm{(u_0, u_1)}_{\dot{H}^1 \times L^2} + 
        \norm{F(t, x, u(t, x))}_{L^1_t L^2_x}) \\
        &\quad\leq C(\norm{(u_0, u_1)}_{\dot{H}^1 \times L^2} + C_F \norm{u}_{L^5_t
        L^{10}_x}^5) \\
        &\quad\leq C[1 + C_F (2C \eta)^4 (2C)] 
        \norm{(u_0, u_1)}_{\dot{H}^1 \times L^2} \\
        &\quad\leq 2C \norm{(u_0, u_1)}_{\dot{H}^1 \times L^2},
    \end{align*}
    provided that $\eta$ is sufficiently small.

    To show that $(\Phi(u))(t)$ and $(\partial_t \Phi(u))(t)$ are also \emph{continuous} in
    $t$, fix a $t_0 \in \R$ and consider, without loss of generality, the case when $t \geq
    t_0$.  The first term on the right-hand side of formula \labelcref{duhamel-phi}
    converges to $\wprop(t_0)(u_0, u_1)$ in $\dot{H}^1 \times L^2$ as $t \to t_0$ since
    $\wprop(t)$ is strongly continuous in $t$. As for the second term, we observe that
    \begin{align*}
        &\Norm{
            \int_{0}^{t}
            \wprop(-s)
        \begin{bmatrix} 0 \\ F(s) \end{bmatrix} \, ds -
            \int_{0}^{t_0}
            \wprop(-s)
        \begin{bmatrix} 0 \\ F(s) \end{bmatrix} \, ds}_{\dot{H}^1 \times L^2} \\
        &\quad\leq
        \Norm{
            \int_{t_0}^{t}
        \frac{\sin(-s\abs{\grad})}{\abs{\grad}} F(s) \, ds}_{\dot{H}^1} +
        \Norm{
            \int_{t_0}^{t}
        \cos(-s\abs{\grad}) F(s) \, ds}_{L^2} \\
        &\quad\les
        \int_{t_0}^{t} \norm{F(s)}_{L^2} \, ds \\
        &\quad \les\norm{u}_{L^5_t L^{10}_x ([t_0, t] \times \R^3)}^5 \to 0 \quad \text{as
        $t\to t_0$},
    \end{align*}
    by the dominated convergence theorem. Consequently, the second term converges to $\,
    \wprop(t_0) \int_0^{t_0} \wprop(-s) (0, F(s)) \, ds$ in $\dot{H}^1 \times L^2$ as $t \to
    t_0$ since $\wprop(t)$ is strongly continuous and uniformly bounded in $t$.  Altogether,
    this shows that $\Phi(u) \in X$ as required.

    Now if $u, v \in X$, the Strichartz estimates also yield
    \begin{align*}
        d(\Phi(u), \Phi(v))
        &\les \norm{F(t, x, u(t, x)) - F(t, x, v(t, x))}_{L^1_t L^2_x} \\
        &\les \norm{(\abs{u}^4 + \abs{v}^4) \abs{u-v}}_{L^1_t L^2_x} \\
        &\les \bigl(\norm{u}_{L^5_t L^{10}_x}^4 + \norm{v}_{L^5_t L^{10}_x}^4\bigr) 
        \norm{u-v}_{L^5_t L^{10}_x} \\
        &\les [(2C\eta)^4 + (2C\eta)^4]\,
        d(u, v),
    \end{align*}
    which shows that $\Phi$ is a contraction for sufficiently small $\eta$.

    Next, we prove that the solution $u$ scatters in $\dot{H}^1 \times L^2$. As $\wprop(t)$
    is unitary on $\dot{H}^1 \times L^2$, this amounts to showing that the
    functions $\wprop^{-1}(t) (u(t), \partial_t u(t))$ converge in $\dot{H}^1 \times L^2$ as
    $t \to \pm \infty$. By time reversal symmetry, it suffices to consider $t \to
    +\infty$. For $t_2 \geq t_1 \geq T$,
    \begin{align*}
        &\Norm{\,\wprop^{-1}(t_2) 
            \begin{bmatrix} u(t) \\ \partial_t u(t) \end{bmatrix} -
            \wprop^{-1}(t_1) 
        \begin{bmatrix} u(t) \\ \partial_t u(t) \end{bmatrix}}_{\dot{H}^1 \times L^2} \\
        &\quad=
        \Norm{
            \int_{0}^{t_2}
            \wprop(-s)
        \begin{bmatrix} 0 \\ F(s) \end{bmatrix} \, ds -
            \int_{0}^{t_1}
            \wprop(-s)
        \begin{bmatrix} 0 \\ F(s) \end{bmatrix} \, ds
        }_{\dot{H}^1 \times L^2} \\
        &\quad\les
        \norm{u}_{L^5_t L^{10}_x ([t_1, t_2] \times \R^3)}^5\to 0 \quad\text{as $T\to
        \infty$},
    \end{align*}
    by the dominated convergence theorem. We conclude that $\set{\wprop^{-1}(t)(u(t),
    \partial_t u(t))}$ is Cauchy in $\dot{H}^1 \times L^2$ as $t \to \infty$ and therefore
    convergent.

    This completes the construction of the wave operator. The construction of the scattering
    operator, using \labelcref{duhamel-scattering} in place of \labelcref{duhamel}, is
    entirely analogous.
\end{proof}

We note that the foregoing argument shows that the wave operator is given by
\begin{align}
    \wop{F} \left(\begin{bmatrix} u_0 \\ u_1 \end{bmatrix}\right) &=
    \begin{bmatrix} u_0 \\ u_1 \end{bmatrix} + 
    \int_0^{\infty} \wprop(-t) \begin{bmatrix} 0 \\ F(t) \end{bmatrix} \, dt,
    \label{duhamel-wop}
\intertext{where $u$ is the solution of \cref{nlw} with initial data $(u_0, u_1)$.
Similarly, the scattering operator is given by}
    \sop{F} \left(\begin{bmatrix} u^-_0 \\ u^-_1 \end{bmatrix}\right) &=
    \begin{bmatrix} u^-_0 \\ u^-_1 \end{bmatrix} + 
    \int_{-\infty}^{\infty} \wprop(-t) \begin{bmatrix} 0 \\ F(t) \end{bmatrix} \, dt,
    \label{duhamel-sop}
\end{align}
where $u$ is the solution of \cref{nlw} that scatters to $(u^-_0, u^-_1)$ as $t \to
-\infty$.

\begin{corollary}[Small-data asymptotics for the wave and scattering operators]
\label{C:asymp}
    Suppose that $F$ is an admissible nonlinearity for \cref{nlw} and that $B_\eta$ is a
    corresponding ball given by \Cref{T:scattering}. 
    If $u_\mathrm{lin}$ denotes the solution of the linear wave equation with initial data
    $(u_0, u_1) \in B_\eta$, then \textup(in $\dot{H}^1 \times L^2$\textup) we have
    \begin{align}
        \wop{F} \left(\begin{bmatrix} u_0 \\ u_1 \end{bmatrix}\right) &=
        \begin{bmatrix} u_0 \\ u_1 \end{bmatrix} + 
        \int_0^{\infty} \wprop(-t) \begin{bmatrix} 0 \\ F_\mathrm{lin}(t) \end{bmatrix}
        \, dt +
        \BigO{\Norm{\begin{bmatrix} u_0 \\ u_1 \end{bmatrix}}_{\dot{H}^1 \times L^2}^9}. 
        \label{wop-asymp}
    \intertext{Similarly, given $(u^-_0, u^-_1) \in B_\eta$, let $u$ be the solution of 
    \cref{nlw} that scatters to $(u^-_0, u^-_1)$ as $t \to -\infty$.
    If $u_\mathrm{lin}$ denotes the solution of the linear wave equation with 
    initial data $(u_0, u_1) \defeq (u(0), \partial_t u(0)) \in B_\eta$, then}
        \sop{F} \left(\begin{bmatrix} u_0^- \\ u_1^- \end{bmatrix}\right) &=
        \begin{bmatrix} u_0^- \\ u_1^- \end{bmatrix} + 
        \int_{-\infty}^{\infty} \wprop(-t) 
        \begin{bmatrix} 0 \\ F_\mathrm{lin}(t) \end{bmatrix} \, dt +
        \BigO{\Norm{\begin{bmatrix} u_0 \\ u_1 \end{bmatrix}}_{\dot{H}^1 \times L^2}^9}.
        \label{sop-asymp}
    \end{align}
    Here $F_\mathrm{lin}(t)$ is an abbreviation for $F(t, {}\cdot{}, u_\mathrm{lin}(t))$.
\end{corollary}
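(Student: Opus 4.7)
The plan is to subtract the expressions for $\wop{F}$ in \labelcref{duhamel-wop} and the Born approximation in \labelcref{wop-asymp}, write the difference as a Duhamel integral over $F(t,\cdot,u(t)) - F(t,\cdot,u_{\mathrm{lin}}(t))$, and estimate this in $\dot{H}^1 \times L^2$ via the Strichartz inequality. Admissibility condition (ii) controls the integrand pointwise by $(\abs{u}^4 + \abs{u_{\mathrm{lin}}}^4)\abs{u - u_{\mathrm{lin}}}$, so by H\"older the task reduces to bounding $u$, $u_{\mathrm{lin}}$, and $u - u_{\mathrm{lin}}$ in $L^5_t L^{10}_x$.

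Concretely, by the Strichartz estimate quoted at the start of \cref{S:scattering}, the error term satisfies
\[
\Norm{\int_0^\infty \wprop(-t) \begin{bmatrix} 0 \\ F(t,\cdot,u(t)) - F_{\mathrm{lin}}(t) \end{bmatrix} \, dt}_{\dot{H}^1 \times L^2}
\les \norm{F(t,x,u) - F(t,x,u_{\mathrm{lin}})}_{L^1_t L^2_x},
\]
and admissibility combined with H\"older yields
\[
\norm{F(t,x,u) - F(t,x,u_{\mathrm{lin}})}_{L^1_t L^2_x}
\les \bigl(\norm{u}_{L^5_t L^{10}_x}^4 + \norm{u_{\mathrm{lin}}}_{L^5_t L^{10}_x}^4\bigr) \norm{u - u_{\mathrm{lin}}}_{L^5_t L^{10}_x}.
\]
The first two norms are $\les \norm{(u_0,u_1)}_{\dot{H}^1 \times L^2}$ by \Cref{T:scattering} and the Strichartz estimate applied to $u_{\mathrm{lin}}$. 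For the last factor, I would write $u - u_{\mathrm{lin}}$ as the Duhamel integral of $F(s)$ and invoke Strichartz once more to get $\norm{u - u_{\mathrm{lin}}}_{L^5_t L^{10}_x} \les \norm{F}_{L^1_t L^2_x} \les \norm{u}_{L^5_t L^{10}_x}^5 \les \norm{(u_0,u_1)}_{\dot{H}^1 \times L^2}^5$. Multiplying gives the desired $\BigO{\norm{(u_0,u_1)}_{\dot{H}^1 \times L^2}^9}$ remainder.

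The scattering operator identity \labelcref{sop-asymp} is handled identically, with the Duhamel integral running from $-\infty$ rather than from $0$ and with $u_{\mathrm{lin}}$ taken to be the free evolution of the data $(u(0),\partial_t u(0))$; the $L^5_t L^{10}_x$ bounds on $u$ and $u_{\mathrm{lin}}$ remain global because \Cref{T:scattering} provides a global solution scattering to the prescribed data at both temporal infinities. No step presents a real obstacle; the only point to be attentive to is not to double-count powers of $\eta$ when applying Strichartz: the factor of $4$ from the difference $F(u) - F(u_{\mathrm{lin}})$ and the factor of $5$ from the Duhamel representation of $u - u_{\mathrm{lin}}$ must be tracked separately to see the exponent $9$ emerge.
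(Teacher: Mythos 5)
Your proof is correct and follows essentially the same route as the paper: both reduce the error to $\norm{F(t,x,u)-F(t,x,u_{\mathrm{lin}})}_{L^1_tL^2_x}$ and then use admissibility, H\"older, and the bounds $\norm{u}_{L^5_tL^{10}_x},\norm{u_{\mathrm{lin}}}_{L^5_tL^{10}_x}\les\eta$ together with $\norm{u-u_{\mathrm{lin}}}_{L^5_tL^{10}_x}\les\norm{(u_0,u_1)}_{\dot H^1\times L^2}^5$ to extract the exponent $9$. The only cosmetic difference is that the paper runs this estimate by duality, pairing the Duhamel integral against the linear evolution of a test datum $(v_0,v_1)$, whereas you bound the $\dot H^1\times L^2$ norm directly via Strichartz/unitarity; the two are equivalent.
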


\begin{proof}
    We will derive the asymptotic expansion \labelcref{sop-asymp} 
    from formula \labelcref{duhamel-sop} for the scattering
    operator; the derivation of \labelcref{wop-asymp} 
    from formula \labelcref{duhamel-wop}
    for the wave operator is similar.

    Comparing \labelcref{duhamel-sop} with \labelcref{sop-asymp},
    we see that the latter follows from
    \[
        \Norm{\int_{-\infty}^\infty \wprop(-t) \begin{bmatrix} 0 \\ F(t) -
        F_\mathrm{lin}(t) \end{bmatrix} \, dt}_{\dot{H}^1 \times L^2}
        \les
        \Norm{\begin{bmatrix} u_0 \\ u_1 \end{bmatrix}}_{\dot{H}^1 \times L^2}^9,
    \]
    which we will prove by duality.
    To this end, fix some $(v_0, v_1) \in \dot{H}^1 \times L^2$ and 
    let $v_\mathrm{lin}$ denote
    the solution of the linear wave equation with initial data $(v_0, v_1)$. Then
    \begin{align*}
        &\Inner{\int_{-\infty}^\infty \wprop(-t) \begin{bmatrix} 0 \\ F(t) -
        F_\mathrm{lin}(t) \end{bmatrix} \, dt}{\begin{bmatrix} v_0 \\ v_1
        \end{bmatrix}}_{\dot{H}^1 \times L^2} \\
        &\quad=
        \int_{-\infty}^\infty \Inner{\begin{bmatrix} 0 \\ F(t) -
        F_\mathrm{lin}(t) \end{bmatrix}}{\,\wprop(t) \begin{bmatrix} v_0 \\ v_1
        \end{bmatrix}}_{\dot{H}^1 \times L^2} \, dt \\
        &\quad=
        \int_{-\infty}^\infty \Inner{\begin{bmatrix} 0 \\ F(t) -
        F_\mathrm{lin}(t) \end{bmatrix}}{\begin{bmatrix} v_\mathrm{lin}(t) \\
        \partial_t v_\mathrm{lin}(t) \end{bmatrix}}_{\dot{H}^1 \times L^2} \, dt \\
        &\quad=
        \int_{-\infty}^\infty \biginner{F(t) - 
        F_\mathrm{lin}(t)}{\partial_t v_\mathrm{lin}(t)}_{L^2} \, dt.
    \end{align*}
    As a result, it will suffice to show that
    \begin{equation} \label{sop-asymp-duality}
        \Abs{\int_{-\infty}^\infty \biginner{F(t) - 
        F_\mathrm{lin}(t)}{\partial_t v_\mathrm{lin}(t)}_{L^2} \, dt}
        \les
        \Norm{\begin{bmatrix} u_0 \\ u_1 \end{bmatrix}}_{\dot{H}^1 \times L^2}^9
        \Norm{\begin{bmatrix} v_0 \\ v_1 \end{bmatrix}}_{\dot{H}^1 \times L^2}.
    \end{equation}
    To estimate this integral, we first employ H\"older's inequality to deduce that
    \begin{align*}
        &\Abs{\int_{-\infty}^\infty 
        \biginner{F(t) - F_\mathrm{lin}(t)}{\partial_t v_{\mathrm{lin}}(t)}_{L^2} \, dt} \\
        &\quad\leq \norm{F(t, x, u(t, x)) - F(t, x, u_\mathrm{lin}(t, x))}_{L^{1}_t L^{2}_x}
        \cdot
        \norm{\partial_t v_{\mathrm{lin}}}_{L^{\infty}_t L^{2}_x} \\
        &\quad\les \bigl(\norm{u}^4_{L^5_t L^{10}_x} + \norm{u_\mathrm{lin}}^4_{L^5_t
        L^{10}_x}\bigr)
        \norm{u - u_\mathrm{lin}}_{L^{5}_t L^{10}_x}
        \cdot
        \norm{\partial_t v_\mathrm{lin}}_{L^{\infty}_t L^{2}_x}\,.
        \stepcounter{equation} \tag{\theequation} \label{sop-asymp-holder}
    \end{align*}
    By \labelcref{soln-estimate} and the Strichartz estimates, we have
    \begin{align*}
        \norm{u}_{L^5_t L^{10}_x}^4
        &\les \norm{(u_0, u_1)}_{\dot{H}^1 \times L^2}^4\,,
        \\
        \norm{u_\mathrm{lin}}_{L^5_t L^{10}_x}^4
        &\les \norm{(u_0, u_1)}_{\dot{H}^1 \times L^2}^4\,,
        \\
        \norm{u - u_\mathrm{lin}}_{L^5_t L^{10}_x} 
        &\les \norm{F(t, x, u(t, x))}_{L^1_t L^2_x}
        \les \norm{u}^5_{L^5_t L^{10}_x}
        \les \norm{(u_0, u_1)}_{\dot{H}^1 \times L^2}^5\,,
        \\
        \norm{\partial_t v_\mathrm{lin}}_{L^\infty_t L^2_x} 
        &\les \norm{(v_0, v_1)}_{\dot{H}^1 \times L^2}\,.
    \end{align*}
    Inserting these estimates into \labelcref{sop-asymp-holder} 
    yields \labelcref{sop-asymp-duality}, completing the proof of the corollary.
\end{proof}

\section{Reduction to a convolution equation} \label{S:reduction}

The next step is the reduction of the proof of \Cref{T:main} to the consideration of a
convolution equation.  As in \cite{Killip}, the central idea is to exploit the Born
approximation for well-chosen solutions of the linear wave equation.  Indeed, the principal
obstacle to be overcome in implementing that strategy is to find solutions of the linear
wave equation with the key properties we need.  Most fundamentally, we need solutions for
which we are not only able to compute the distribution function (i.e., the measure of
spacetime superlevel sets), but can also prove that the Fourier transform of a certain
function $w$ connected with it does not vanish.

Our solutions will be built from the radially symmetric solution
\[
    u_\mathrm{lin}(t, r) \defeq \frac{f(r-t) - f(r+t)}{r}
\]
of the linear wave equation $(\partial_{tt} - \lap_x) u(t, x) = 0$ on $\R \times \R^3$, 
where $r \defeq \abs{x}$ and $f(s) \defeq \max \set{1-\abs{s}, 0}$.
This solution arises from the initial data
\begin{equation} \label{initial-data}
    \begin{split}
    u_0(x) &\defeq
    u_\mathrm{lin}(0, \abs{x}) 
    = 0 \in \dot{H}^1(\R^3), \\
    u_1(x) &\defeq
    \partial_t u_\mathrm{lin}(0, \abs{x}) 
    =
    \begin{cases*}
        \frac{2}{\abs{x}} & if $0 < \abs{x} \leq 1$, \\
        0 & if $\abs{x} > 1$
    \end{cases*}
    \in L^2(\R^3).
    \end{split}
\end{equation}
In addition, $u_\mathrm{lin}(t, x) \geq 0$ for $t > 0$ and $u_\mathrm{lin}(t, x)$ is odd in
$t$.

The next lemma gives a formula for the distribution function of $u_\mathrm{lin}$.  The
function $w$ connected with this solution is presented in \eqref{weight}.  The nonvanishing
of the Fourier transform of $w$ will be demonstrated in Proposition~\ref{P:w-ft}. 

\begin{lemma} \label{L:measures}
    For $\lambda > 0$, let
    \begin{align*}
        m(\lambda) 
        &\defeq \bigmes{\set{(t, x) \in (0, \infty) \times \R^3 : 
        u_\mathrm{lin}(t, x) > \lambda}}.
    \end{align*}
    Then
    \[
        m(\lambda) 
        = \frac{4\pi}{3\vphantom{\lambda^3}} 
        \left(\frac{1}{2\lambda^3} - \frac{2}{(\lambda+2)^3}\right)
        1_{(0, 2)}(\lambda).
    \]
\end{lemma}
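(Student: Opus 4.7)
My plan is to exploit spherical symmetry and then pass to characteristic coordinates $a = r-t$, $b = r+t$, which both simplify $u_\mathrm{lin}$ and give a clean description of its superlevel sets. Radial symmetry gives $m(\lambda) = 4\pi \int_0^\infty \int_0^\infty r^2 \mathbf{1}_{\{u_\mathrm{lin}(t,r) > \lambda\}} \, dr \, dt$. The map $(r,t) \mapsto (a,b) = (r-t, r+t)$ satisfies $dr\,dt = \tfrac{1}{2}\, da\, db$, sends $\{r > 0, t > 0\}$ onto $\{b > |a|\}$, and yields $r = \tfrac{a+b}{2}$ and $u_\mathrm{lin} = \tfrac{2(f(a) - f(b))}{a+b}$. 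Hence
\[
    m(\lambda) = \frac{\pi}{2} \iint_{b > |a|} (a+b)^2 \, \mathbf{1}_{\{2(f(a) - f(b)) > \lambda(a+b)\}} \, da \, db.
\]

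The piecewise structure of $f = \max\{1-|\cdot|,0\}$ partitions $\{b > |a|\} \cap \{f(a) > f(b)\}$ into four natural pieces. The key observation is that on $\{-1 < a < 0, \, -a < b < 1\}$, both $f$-values are linear and $f(a) - f(b) = (1+a) - (1-b) = a + b$, so $u_\mathrm{lin} \equiv 2$: this is the maximum of $u_\mathrm{lin}$, which immediately yields $m(\lambda) = 0$ for $\lambda \geq 2$, and for $0 < \lambda < 2$ this piece contributes the constant $\tfrac{\pi}{2} \int_{-1}^0 \int_{-a}^1 (a+b)^2 \, db \, da = \tfrac{\pi}{24}$ to $m(\lambda)$. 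On each of the three remaining pieces the condition $u_\mathrm{lin} > \lambda$ reduces to a single linear inequality in $(a,b)$---for instance $b > \tfrac{2+\lambda}{2-\lambda}\, a$ on $\{0 < a < 1, \, a < b < 1\}$, and $b < \tfrac{2-(2+\lambda)a}{\lambda}$ on $\{0 < a < 1, \, b \geq 1\}$---and I then integrate $(a+b)^2$ over the resulting triangular or trapezoidal region whose vertices depend rationally on $\lambda$.

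Summing the four contributions, the $\lambda$-independent terms (notably the $\tfrac{\pi}{24}$ from the constant region) cancel, leaving only multiples of $\lambda^{-3}$ and $(\lambda+2)^{-3}$; the identity $1 + \tfrac{2+\lambda}{2-\lambda} = \tfrac{4}{2-\lambda}$ is what causes the $(\lambda+2)^{-3}$ combination to emerge cleanly. A short computation yields $m(\lambda) = \tfrac{2\pi}{3}\bigl(\lambda^{-3} - 4(\lambda+2)^{-3}\bigr)$, which rearranges to the stated formula after factoring out $\tfrac{4\pi}{3}$. The main obstacle is purely algebraic bookkeeping: each of the three non-constant pieces produces $\lambda$-dependent polynomial integrals with several subleading terms, and one must verify carefully that everything outside the $\lambda^{-3}$ and $(\lambda+2)^{-3}$ contributions cancels; there is no conceptual difficulty beyond this.
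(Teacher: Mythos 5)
Your argument is correct, and it takes a genuinely different route from the paper's. The paper slices the spacetime integral by time, computing $m(t;\lambda)=\bigl\lvert\{x: u_{\mathrm{lin}}(t,x)>\lambda\}\bigr\rvert$ explicitly in the three regimes $0<t<\tfrac12$, $\tfrac12<t<1$, $t>1$ (where the superlevel set is a ball, a ball, and a spherical shell, respectively) and then integrating the resulting piecewise-rational volumes in $t$; you instead pass to the null coordinates $(a,b)=(r-t,r+t)$, where the partition is dictated by the piecewise-linear structure of $f$ rather than by time regimes and every superlevel condition becomes a single linear inequality. I verified the bookkeeping you deferred: with $\mu=\tfrac{2+\lambda}{2-\lambda}$, the constant piece contributes $\tfrac{\pi}{24}$, the piece $\{0<a<1/\mu,\ \mu a<b<1\}$ contributes $\tfrac{8\pi}{3(\lambda+2)^3}-\tfrac{\pi}{24}$, and the two pieces with $b\ge 1$ contribute $\tfrac{\pi}{3\lambda^3}-\tfrac{\pi}{24}$ and $\tfrac{\pi}{3\lambda^3}-\tfrac{16\pi}{3(\lambda+2)^3}+\tfrac{\pi}{24}$ (the key simplification being that $a+b$ evaluated at the upper limit $b=\tfrac{2\mp(2\pm\lambda)a}{\lambda}$ collapses to $\tfrac{2(1\pm a)}{\lambda}$), so the four terms sum to $\tfrac{2\pi}{3}\bigl(\lambda^{-3}-4(\lambda+2)^{-3}\bigr)$, which is the stated formula. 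Your approach buys simpler integration domains (triangles and trapezoids with the integrand $(a+b)^2$, versus cubes of rational functions of $t$) and the cleanest possible proof that $m(\lambda)=0$ for $\lambda\ge 2$, since $u_{\mathrm{lin}}\equiv 2$ exactly on $\{-1<a<0,\ -a<b<1\}$; the only extra care it requires is the Jacobian and the image of the quadrant $\{t,r>0\}$, both of which you handled correctly.
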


\begin{proof}
    For $t, \lambda > 0$, let
    \[
        m(t; \lambda) \defeq \bigmes{\set{x \in \R^3 : 
        u_\mathrm{lin}(t, x) > \lambda}}
    \]
    so that
    \begin{align}\label{m}
        m(\lambda) = \int_0^\infty m(t; \lambda) \, dt.
    \end{align}

    We will evaluate this integral by analyzing $u_\mathrm{lin}$ on the spacetime regions 
    $0 < t < \frac{1}{2}$, $\frac{1}{2} < t < 1$, and $t > 1$.

    On the region $0 < t < \frac{1}{2}$, we have
    \[
        u_\mathrm{lin}(t, r) = 
        \begin{cases*}
            2 & if $0 < r < t$, \\
            \frac{2t}{r} & if $t \leq r < 1-t$, \\
            \frac{1-r+t}{r} & if $1-t \leq r < 1+t$, \\
            0 & otherwise.
        \end{cases*}
    \]
    Hence, for $0 < t < \frac{1}{2}$,
    \[
        m(t; \lambda) =
        \frac{4\pi}{3} \cdot
        \begin{cases*}
            (\frac{1+t}{1+\lambda})^3 & if $0 < \lambda < \frac{2t}{1-t}$, \\
            (\frac{2t}{\lambda})^3 & if $\frac{2t}{1-t} \leq \lambda < 2$, \\
            0 & otherwise.
        \end{cases*}
    \]
    Therefore, the contribution of this region to the right-hand side of \eqref{m} is
    \begin{align}
        \int_0^\frac{1}{2}\!\! m(t; \lambda) \, dt &= \!\int_0^\frac{1}{2} \!\frac{4\pi}{3}
        \left(\frac{1+t}{1+\lambda}\right)^3\!\!
        1_{\set{0 < \lambda < \frac{2t}{1-t}}}(t) \, dt
        + \int_0^\frac{1}{2} \!\frac{4\pi}{3}
        \left(\frac{2t}{\lambda}\right)^3 \!\!
        1_{\set{\frac{2t}{1-t} \leq \lambda < 2}}(t) \, dt \notag\\
        &= 
        \left[
            \int_\frac{\lambda}{\lambda+2}^\frac{1}{2} \frac{4\pi}{3}
            \left(\frac{1+t}{1+\lambda}\right)^3 \, dt
        \right]
        1_{(0, 2)}(\lambda)
        + 
        \left[
            \int_0^\frac{\lambda}{\lambda+2} \frac{4\pi}{3}
            \left(\frac{2t}{\lambda}\right)^3 \, dt
        \right]
        1_{(0, 2)}(\lambda) \notag\\
        &=
        \frac{4\pi}{3\vphantom{()^3}}
        \left(\frac{81}{64(\lambda+1)^3} - \frac{4(\lambda+1)}{(\lambda+2)^4}\right)
        1_{(0, 2)}(\lambda) +
        \frac{4\pi}{3\vphantom{()^3}} \cdot\frac{2\lambda}{(\lambda+2)^4}\,
        1_{(0, 2)}(\lambda) \notag\\
        &= \frac{4\pi}{3\vphantom{()^3}}
        \left(\frac{81}{64(\lambda+1)^3} - \frac{2}{(\lambda+2)^3}\right) 1_{(0,
        2)}(\lambda). \label{int1}
    \end{align}

    On the region $\frac{1}{2} < t < 1$, we have
    \[
        u_\mathrm{lin}(t, r) = 
        \begin{cases*}
            2 & if $0 < r < 1-t$, \\
            \frac{1+r-t}{r} & if $1-t \leq r < t$, \\
            \frac{1-r+t}{r} & if $t \leq r < 1+t$, \\
            0 & otherwise.
        \end{cases*}
    \]
    Hence, for $\frac{1}{2} < t < 1$,
    \[
        m(t; \lambda) =
        \frac{4\pi}{3} \cdot
        \begin{cases*}
            (\frac{1+t}{1+\lambda})^3 & if $0 < \lambda < \frac{1}{t}$, \\
            (\frac{1-t}{\lambda-1})^3 & if $\frac{1}{t} \leq \lambda < 2$, \\
            0 & otherwise.
        \end{cases*}
    \]
    Therefore, the contribution of this region to the right-hand side of \eqref{m} is
    \begin{align*}
        \int_\frac{1}{2}^1 m(t; & \lambda) \, dt = \int_\frac{1}{2}^1 \frac{4\pi}{3}
        \left(\frac{1+t}{1+\lambda}\right)^3 \!\!1_{\set{0 < \lambda < \frac{1}{t}}}(t) \,
        dt
        + \int_\frac{1}{2}^1 \frac{4\pi}{3}
        \left(\frac{1-t}{\lambda-1}\right)^3 \!\!
        1_{\set{\frac{1}{t} \leq \lambda < 2}}(t) \, dt \\
        &= \left[ \int_\frac{1}{2}^1 \frac{4\pi}{3}
        \left(\frac{1+t}{1+\lambda}\right)^3 \, dt \right] 1_{(0, 1]}(\lambda)
        + \left[ \int_\frac{1}{2}^\frac{1}{\lambda} \frac{4\pi}{3}
        \left(\frac{1+t}{1+\lambda}\right)^3 \, dt \right] 1_{(1, 2)}(\lambda) \\
        &\hphantom{{}={}}
        + \left[ \int_\frac{1}{\lambda}^1 \frac{4\pi}{3}
        \left(\frac{1-t}{\lambda-1}\right)^3 \, dt \right] 1_{(1, 2)}(\lambda) \\
        &= \frac{4\pi}{3\vphantom{()^3}}\cdot
        \frac{175}{64(\lambda+1)^3} \,1_{(0, 1]}(\lambda)
        + \frac{4\pi}{3\vphantom{()^3}}
        \left(\frac{\lambda+1}{4\lambda^4} - \frac{81}{64(\lambda+1)^3}\right) 1_{(1,
        2)}(\lambda) \\
        &\hphantom{{}={}}
        + \frac{4\pi}{3\vphantom{\lambda^4}}
        \cdot\frac{\lambda-1}{4\lambda^4}\, 1_{(1, 2)}(\lambda) \\
        &= \frac{4\pi}{3\vphantom{()^3}}
        \cdot\frac{175}{64(\lambda+1)^3} \,1_{(0, 1]}(\lambda)
        + \frac{4\pi}{3\vphantom{()^3}}
        \left(\frac{1}{2\lambda^3} - \frac{81}{64(\lambda+1)^3}\right) 1_{(1,
        2)}(\lambda).
        \addtocounter{equation}{1}\tag{\theequation}\label{int2}
    \end{align*}

    On the region $t > 1$, we have
    \[
        u_\mathrm{lin}(t, r) = 
        \begin{cases*}
            \frac{1+r-t}{r} & if $t-1 \leq r < t$, \\
            \frac{1-r+t}{r} & if $t \leq r < t+1$, \\
            0 & otherwise.
        \end{cases*}
    \]
    Hence, for $t > 1$,
    \[
        m(t; \lambda) =
        \frac{4\pi}{3} \cdot
        \begin{cases*}
            (\frac{t+1}{\lambda+1})^3 - (\frac{t-1}{1-\lambda})^3 & if $0 < \lambda <
            \frac{1}{t}$, \\
            0 & otherwise.
        \end{cases*}
    \]
    Therefore, the contribution of this region to the right-hand side of \eqref{m} is
    \begin{align*}
        \int_1^\infty m(t; \lambda) \, dt
        &= \int_1^\infty \frac{4\pi}{3}
        \left[\left(\frac{t+1}{\lambda+1}\right)^3 - 
        \left(\frac{t-1}{1-\lambda}\right)^3\right] 
        1_{\set{0 < \lambda < \frac{1}{t}}}(t) \, dt \\
        &= \left\{ \int_1^\frac{1}{\lambda} \frac{4\pi}{3}
        \left[\left(\frac{t+1}{\lambda+1}\right)^3 - 
        \left(\frac{t-1}{1-\lambda}\right)^3\right] \, dt \right\} 
        1_{(0, 1)}(\lambda) \\
        &= \frac{4\pi}{3\vphantom{()^3}}
        \left(\frac{1}{2\lambda^3} - \frac{4}{(\lambda+1)^3}\right) 1_{(0, 1)}(\lambda).
        \addtocounter{equation}{1}\tag{\theequation}\label{int3}
    \end{align*}

    Finally, combining \eqref{m} with  \labelcref{int1}, \labelcref{int2}, and
    \labelcref{int3} completes the proof of the lemma.
\end{proof}

To continue, we generate further solutions of the linear wave equation using the scaling
symmetry.  Specifically, for positive parameters $\alpha$ and $\epsilon$, the rescaled
function $u_\mathrm{lin}^{\alpha, \epsilon}$ defined as
\begin{align*}
    u_\mathrm{lin}^{\alpha, \epsilon}(t, x) 
    &\defeq \alpha u_\mathrm{lin}((\alpha/\epsilon)^2 t, (\alpha/\epsilon)^2 x)
\end{align*}
solves the linear wave equation with initial data
\begin{align*}
    u_0^{\alpha, \epsilon}(x) 
    &\defeq 
    u_\mathrm{lin}^{\alpha, \epsilon}(0, x) 
    =
    \alpha u_0((\alpha/\epsilon)^2 x)
    = 0,
    \\
    u_1^{\alpha, \epsilon}(x)
    &\defeq
    \partial_t u_\mathrm{lin}^{\alpha, \epsilon}(0, x) 
    = (\alpha/\epsilon)^2 \alpha u_1((\alpha/\epsilon)^2 x).
\end{align*}
Under this rescaling, we have $\norm{u_0^{\alpha, \epsilon}}_{\dot{H}^1} = \epsilon
\norm{u_0}_{\dot{H}^1}$ and $\norm{u_1^{\alpha, \epsilon}}_{L^2} = \epsilon
\norm{u_1}_{L^2}$, so from \labelcref{initial-data} we compute that
\[
    \norm{(u_0^{\alpha, \epsilon}, u_1^{\alpha, \epsilon})}_{\dot{H}^1 \times L^2}^2
    = \epsilon^2 \norm{(u_0, u_1)}_{\dot{H}^1 \times L^2}^2
    = 16 \pi \epsilon^2.
\]
In particular, if $F$ is an admissible nonlinearity for \cref{nlw} and $B_\eta$ is a
corresponding ball given by \Cref{T:scattering}, then 
$(u_0^{\alpha, \epsilon}, u_1^{\alpha, \epsilon}) \in B_\eta$ 
for all sufficiently small $\epsilon$.

We will also rely on the observation that $u_\mathrm{lin}(t, x) = \partial_t
v_\mathrm{lin}(t, x)$, where $v_\mathrm{lin}$ is itself a radially symmetric solution of the
linear wave equation on $\R \times \R^3$ with initial data
\begin{align*}
    v_0(x) &\defeq v_\mathrm{lin}(0, x) =
    \begin{cases*}
        \abs{x}-2 & if $0 < \abs{x} \leq 1$, \\
        -\frac{1}{\abs{x}} & if $\abs{x} > 1$
    \end{cases*}
    \in \dot{H}^1(\R^3), \\
    v_1(x) & \defeq
    \partial_t v_\mathrm{lin}(0, x) = 0 \in L^2(\R^3).
\end{align*}
Thus, $u_\mathrm{lin}^{\alpha, \epsilon}(t, x) = \partial_t v^{\alpha,
\epsilon}_\mathrm{lin}(t, x)$, where the rescaled function $v_\mathrm{lin}^{\alpha,
\epsilon}$ defined as
\begin{align*}
    v_\mathrm{lin}^{\alpha, \epsilon}(t, x) 
    &\defeq (\alpha/\epsilon)^{-2} 
    \alpha v_\mathrm{lin}((\alpha/\epsilon)^2 t, (\alpha/\epsilon)^2 x)
\intertext{solves the linear wave equation with initial data}
    v_0^{\alpha, \epsilon}(x)
    &\defeq 
    v_\mathrm{lin}^{\alpha, \epsilon}(0, x) 
    =
    (\alpha/\epsilon)^{-2} \alpha v_0((\alpha/\epsilon)^2 x),
    \\
    v_1^{\alpha, \epsilon}(x)
    &\defeq 
    \partial_t v_\mathrm{lin}^{\alpha, \epsilon}(0, x) 
    =
    \alpha v_1((\alpha/\epsilon)^2 x)
    = 0.
\end{align*}
Under this rescaling, we have
\[
    \norm{(v_0^{\alpha, \epsilon}, v_1^{\alpha, \epsilon})}_{\dot{H}^1 \times L^2}^2
    = (\alpha/\epsilon)^{-6} \alpha^2 \norm{(v_0, v_1)}_{\dot{H}^1 \times L^2}^2\,
    = 16 \pi \epsilon^6 / 3\alpha^4.
\]

\begin{proposition}[Reduction to a convolution equation] \label{P:conv}
    Suppose that $F$ and $\tilde{F}$ are admissible nonlinearities for \cref{nlw}.
    For $(t_0, x_0) \in \dpt{F} \cap \dpt{\tilde{F}}$ and $\tau \in \R$, define
    \begin{align*}
        H(\tau; t_0, x_0) &\defeq 
        e^{-4\tau} \frac{\partial F}{\partial u}(t_0, x_0, e^\tau)  + 
        e^{-5\tau} F(t_0, x_0, e^\tau) , \\
        \tilde{H}(\tau; t_0, x_0) &\defeq 
        e^{-4\tau} \frac{\partial \tilde{F}}{\partial u}(t_0, x_0, e^\tau) + 
        e^{-5\tau} \tilde{F}(t_0, x_0, e^\tau) .
    \end{align*}
    Then $H$ and $\tilde{H}$ are bounded
    and, under the hypotheses of \Cref{T:main}, we have
    \begin{equation} \label{conv-eqn}
        H * w = \tilde{H} * w,
    \end{equation}
    where
    \begin{equation} \label{weight}
        w(\tau) \defeq \left(e^{-3\tau} - \frac{4e^{-6\tau}}{(e^{-\tau} +
        1)^3}\right) 1_{(0, \infty)}(\tau).
    \end{equation}
\end{proposition}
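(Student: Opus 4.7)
The boundedness of $H$ and $\tilde H$ is immediate from admissibility: taking $v = 0$ in condition~(ii) yields $\abs{F(t_0, x_0, u)} \les \abs{u}^5$, and the same bound gives $\abs{\partial_u F(t_0, x_0, u)} \les \abs{u}^4$ at a.e.\ $u$, so $\abs{H(\tau)} \les 1$ uniformly in $\tau$ (and likewise for $\tilde H$). For~\eqref{conv-eqn}, the plan is to apply Corollary~\ref{C:asymp} to the initial data whose linear evolution is $\tilde u(t, x) \defeq u_\mathrm{lin}^{\alpha, \epsilon}(t - t_0, x - x_0)$, to test against the dual solution $v_\mathrm{lin}^{\alpha, \epsilon}(t - t_0, x - x_0)$ (whose time-derivative is exactly $\tilde u$), and to send $\epsilon \to 0$ so that $\tilde u$ concentrates at $(t_0, x_0)$.

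Repeating the duality argument from the proof of Corollary~\ref{C:asymp} and subtracting the two Born expansions---which agree by hypothesis---will then yield
\begin{equation*}
    \int_{\R \times \R^3}
    \bigl[F(t, x, \tilde u(t,x)) - \tilde F(t, x, \tilde u(t,x))\bigr]
    \,\tilde u(t, x)\, dx\, dt = \BigO{\epsilon^{12} / \alpha^2},
\end{equation*}
where the right-hand side uses the identities $\norm{(u_0^{\alpha,\epsilon}, u_1^{\alpha,\epsilon})}_{\dot{H}^1 \times L^2}^2 = 16\pi\epsilon^2$ and $\norm{(v_0^{\alpha,\epsilon}, v_1^{\alpha,\epsilon})}_{\dot{H}^1 \times L^2}^2 = 16\pi\epsilon^6/(3\alpha^4)$ recorded above. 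I would then change variables by $s = (\alpha/\epsilon)^2(t - t_0)$, $y = (\alpha/\epsilon)^2(x - x_0)$ and multiply by $\alpha^7/\epsilon^8$ to rescale this to
\begin{equation*}
    \int_{\R \times \R^3} \bigl[F - \tilde F\bigr]\bigl(t_0 + (\epsilon/\alpha)^2 s,\, x_0 + (\epsilon/\alpha)^2 y,\, \alpha u_\mathrm{lin}(s, y)\bigr)\, u_\mathrm{lin}(s, y)\, ds\, dy = \BigO{\alpha^5 \epsilon^4}.
\end{equation*}
Holding $\alpha > 0$ fixed and sending $\epsilon \to 0$, Lemma~\ref{L:leb-diff}---whose role is precisely to transfer the Lebesgue-point property of $F(\cdot, \cdot, u)$ at $(t_0, x_0)$ from the rationals to all relevant levels via the Lipschitz-in-$u$ control in admissibility~(ii)---replaces the first two arguments of $F - \tilde F$ by $(t_0, x_0)$ in the limit, producing
\begin{equation*}
    \int_{\R \times \R^3} \Delta F(\alpha u_\mathrm{lin}(s, y))\, u_\mathrm{lin}(s, y)\, ds\, dy = 0 \qquad \text{for every } \alpha > 0,
\end{equation*}
where $\Delta F(u) \defeq F(t_0, x_0, u) - \tilde F(t_0, x_0, u)$.

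To finish, oddness of $\Delta F$ together with $u_\mathrm{lin}(-s, y) = -u_\mathrm{lin}(s, y)$ makes the integrand even in $s$, allowing me to restrict integration to $\set{s > 0}$ at the cost of a factor of $2$. Applying the layer cake formula to the primitive $G_\alpha(u) \defeq \Delta F(\alpha u)\, u$, whose a.e.\ derivative is $G_\alpha'(\lambda) = \alpha\, \Delta F'(\alpha \lambda)\, \lambda + \Delta F(\alpha \lambda)$, rewrites the resulting integral as $\int_0^\infty G_\alpha'(\lambda)\, m(\lambda)\, d\lambda = 0$, with $m$ supplied by Lemma~\ref{L:measures}. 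Substituting $\lambda = e^\tau/\alpha$ brings this to $\alpha^{-1} \int_{\R} e^{6\tau}\, m(e^\tau/\alpha)\, (H - \tilde H)(\tau)\, d\tau = 0$; with the choice $\alpha = e^\sigma/2$, a short computation from the explicit formula for $m$ reveals that $e^{6\tau} m(2 e^{\tau - \sigma})$ is a positive multiple of $w(\sigma - \tau)$, so the identity collapses to $((H - \tilde H) * w)(\sigma) = 0$ for every $\sigma \in \R$---precisely~\eqref{conv-eqn}. The principal obstacle is the $\epsilon \to 0$ localization: because $u_\mathrm{lin}$ takes a continuum of values, Lebesgue-point information about $F$ known only at countably many (rational) levels of $u$ must be promoted to a uniform-in-$u$ statement, which is exactly what Lemma~\ref{L:leb-diff} is engineered to supply.
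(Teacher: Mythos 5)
Your proposal is correct and follows essentially the same route as the paper: pairing the Born approximation against $v_\mathrm{lin}^{\alpha,\epsilon}$, localizing to $(t_0,x_0)$ via Lemma~\ref{L:leb-diff}, converting the spacetime integral to $\int_0^\infty G'(\lambda)m(\lambda)\,d\lambda$ with Lemma~\ref{L:measures}, and passing to exponential variables with $\alpha = e^{\sigma}/2$ to recognize the convolution with $w$. The only differences are cosmetic (you subtract the two expansions before localizing and computing, whereas the paper evaluates the $F$-integral explicitly first and then compares), and your error bookkeeping $\bigO{\alpha^5\epsilon^4}$ matches the paper's.
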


The proof of this proposition relies on the following result, which shows that in the Born
approximation described in Corollary~\ref{C:asymp}, we may replace $F(t,x,u)$ by $F(t_0,x_0,
u)$ up to acceptable errors.

\begin{lemma}\label{L:leb-diff}
    Suppose that $F$ is an admissible nonlinearity for \cref{nlw}.
    Then for all $(t_0, x_0) \in \dpt{F}$, we have 
    \begin{align*}
        &\int_{-\infty}^\infty 
        \biginner{F(t, x, u_\mathrm{lin}^{\alpha, \epsilon}(t-t_0, x-x_0))}{
        u_\mathrm{lin}^{\alpha, \epsilon}(t-t_0, x-x_0)}_{L^2_x} \, dt \\
        &\quad=
        \int_{-\infty}^\infty 
        \biginner{F(t_0, x_0, u_\mathrm{lin}^{\alpha, \epsilon}(t-t_0, x-x_0))}{
        u_\mathrm{lin}^{\alpha, \epsilon}(t-t_0, x-x_0)}_{L^2_x} \, dt 
        + \lilo[\alpha]{\epsilon^8}
    \end{align*}
as $\epsilon\to 0$.
\end{lemma}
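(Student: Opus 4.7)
The plan is to exploit the scaling of $u_\mathrm{lin}^{\alpha, \epsilon}$ to reduce the lemma to a statement about a fixed spacetime integral against $u_\mathrm{lin}$. With the substitution $(s, y) = (\alpha/\epsilon)^2(t - t_0, x - x_0)$ and the abbreviation $F_\epsilon(s, y, v) \defeq F(t_0 + (\epsilon/\alpha)^2 s,\, x_0 + (\epsilon/\alpha)^2 y,\, v)$, the difference of the two sides of the statement equals $\frac{\epsilon^8}{\alpha^7} J(\epsilon)$, where
\[
    J(\epsilon) \defeq \int_{\R \times \R^3} \bigl[F_\epsilon(s, y, \alpha u_\mathrm{lin}(s, y)) - F(t_0, x_0, \alpha u_\mathrm{lin}(s, y))\bigr] \, u_\mathrm{lin}(s, y) \, ds \, dy.
\]
It therefore suffices to show that $J(\epsilon) \to 0$ as $\epsilon \to 0$ for each fixed $\alpha > 0$.

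The formulas in \Cref{L:measures} imply that $|u_\mathrm{lin}|$ is bounded and decays like $1/|t|$ on its support for $|t|$ large, so $u_\mathrm{lin} \in L^6_{t,x}(\R \times \R^3)$. Admissibility forces $|F_\epsilon(s, y, \alpha u_\mathrm{lin})|, |F(t_0, x_0, \alpha u_\mathrm{lin})| \les \alpha^5 |u_\mathrm{lin}|^5$, so the integrand in $J$ is dominated pointwise by a multiple of $\alpha^5 |u_\mathrm{lin}|^6$. Given any $\gamma > 0$, we may therefore choose $R > 0$ (depending on $\alpha$ and $\gamma$) so that restricting $J(\epsilon)$ to $\set{(s, y) : |(s, y)| > R}$ contributes at most $\gamma/3$, uniformly in $\epsilon$.

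Inside the ball $B_R \subset \R \times \R^3$, we approximate $\alpha u_\mathrm{lin}$ by a measurable simple function $\Psi_\delta$ taking finitely many rational values $\set{q_1, \dots, q_N} \subset [-2\alpha, 2\alpha]$ and satisfying $\norm{\alpha u_\mathrm{lin} - \Psi_\delta}_{L^\infty} \leq \delta$. Inserting and subtracting $F_\epsilon(s, y, \Psi_\delta)$ and $F(t_0, x_0, \Psi_\delta)$ splits the $B_R$-integrand into three pieces. The two approximation-error pieces are bounded pointwise by $C \alpha^4 \delta |u_\mathrm{lin}|$ using condition (ii) of \Cref{D:admissible} together with $|\alpha u_\mathrm{lin}|, |\Psi_\delta| \leq 2\alpha$, and they integrate on $B_R$ to at most $C_{\alpha, R} \delta$; choosing $\delta$ small makes this $\leq \gamma/3$. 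The remaining middle piece equals
\[
    \sum_{j=1}^N \int_{B_R} 1_{\set{\Psi_\delta = q_j}} \bigl[F_\epsilon(s, y, q_j) - F(t_0, x_0, q_j)\bigr] u_\mathrm{lin}(s, y) \, ds \, dy,
\]
and using $|u_\mathrm{lin}| \leq 2$ together with the change-of-variables identity
\[
    \norm{F_\epsilon(\cdot, \cdot, q_j) - F(t_0, x_0, q_j)}_{L^1(B_R)} = (\alpha/\epsilon)^{8} \int_{B_{R(\epsilon/\alpha)^2}(t_0, x_0)} |F(t, x, q_j) - F(t_0, x_0, q_j)| \, dt \, dx,
\]
the Lebesgue point property of $F(\cdot, \cdot, q_j)$ at $(t_0, x_0) \in \dpt{F}$ makes each of the finitely many summands (and hence the whole sum) at most $\gamma/3$ once $\epsilon$ is small.

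The main obstacle to be overcome is that the spacetime support of $u_\mathrm{lin}^{\alpha, \epsilon}(t - t_0, x - x_0)$ is \emph{not} a tiny ball around $(t_0, x_0)$; rather, as read off from \Cref{L:measures}, it is a thin shell-like tube around the forward/backward light cone emanating from $(t_0, x_0)$, extending to infinity. The Lebesgue point property at $(t_0, x_0)$ therefore cannot be applied directly to the full support. The tail truncation, made possible by $L^6_{t,x}$ integrability of $u_\mathrm{lin}$, confines the remaining delicate estimate to a region that shrinks to $(t_0, x_0)$ after rescaling, where the Lebesgue point property finally bites.
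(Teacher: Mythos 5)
Your proposal is correct and follows essentially the same route as the paper: rescale to a fixed integral against $u_\mathrm{lin}$, truncate the light-cone tail using the $L^6_{t,x}$ integrability of $u_\mathrm{lin}$, approximate $\alpha u_\mathrm{lin}$ on the bounded region by a simple function with finitely many rational values (the paper's level sets $U_n^R$ with values $\lceil 2\alpha\rceil n/N$ are exactly your $\Psi_\delta$), and finish by applying the Lebesgue point property to each rational value after undoing the rescaling. The quantitative bookkeeping (the $(\alpha/\epsilon)^8$ Jacobian against the $|B_r| \sim r^4$ normalization, and the order of choosing $R$, then $\delta$, then $\epsilon$) matches the paper's argument.
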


We postpone the proof of Lemma~\ref{L:leb-diff} until after we have completed that of
Proposition~\ref{P:conv}.

\begin{proof}[Proof of \Cref{P:conv}]
    We only consider the case where the scattering operators agree, as the wave operators
    can be treated similarly. By time and space translation symmetry, it suffices to treat
    the case $(t_0, x_0) = (0,0)$.

    Let $G(t, x, u) \defeq F(t, x, u) u$ so that
    \begin{align*}
        \int_{-\infty}^\infty
        \biginner{F(0, 0, u_\mathrm{lin}(t, x))}{u_\mathrm{lin}(t, x)}_{L^2_x} \, dt
        &=
        \int_{-\infty}^\infty \int_{\R^3} 
        G(0, 0, u_\mathrm{lin}(t, x)) \, dx \, dt.
    \intertext{%
    By the fundamental theorem of calculus, Fubini's theorem, and \Cref{L:measures},
    }
        \int_{-\infty}^\infty \int_{\R^3} 
        G(0, 0, u_\mathrm{lin}(t, x)) \, dx \, dt
        &=
        2
        \int_0^\infty
        \frac{\partial G}{\partial u}(0, 0, \lambda)
        m(\lambda) \, d\lambda.
    \intertext{Hence,}
        \int_{-\infty}^\infty
        \biginner{F(0, 0, u_\mathrm{lin}^{\alpha, \epsilon}(t, x))}{
        u_\mathrm{lin}^{\alpha, \epsilon}(t, x)}_{L^2_x} \, dt
        &=
        2
        (\alpha/\epsilon)^{-8}
        \int_0^\infty
        \frac{\partial G}{\partial u}(0, 0, \lambda)
        m(\lambda/\alpha) \, d\lambda.
    \end{align*}
    Performing the change of variables $\lambda \eqdef e^{\tau}$, we obtain
    \begin{align*}
        &\int_{-\infty}^\infty
        \biginner{F(0, 0, u_\mathrm{lin}^{\alpha, \epsilon}(t, x))}{
        u_\mathrm{lin}^{\alpha, \epsilon}(t, x)}_{L^2_x} \, dt
        \\
        &\quad=
        \frac{2\epsilon^8}{\alpha^8}
        \int_{-\infty}^\infty
        \frac{\partial G}{\partial u}(0, 0, e^\tau) e^\tau
        m(e^{\tau - \log \alpha}) \, d\tau \\
        &\quad=
        \frac{2\epsilon^8}{\alpha^8}
        \int_{-\infty}^\infty
        H(\tau) e^{6\tau}
        m(e^{\tau - \log \alpha}) 
        \, d\tau \\
        &\quad=
        \frac{2\epsilon^8}{\alpha^8} \cdot \frac{(2\alpha)^6 \pi}{12}
        \int_{-\infty}^\infty
        H(\tau)
        \cdot
        \frac{12}{\pi\vphantom{\alpha^8}}
        e^{-6(\log 2\alpha - \tau)} m(e^{\tau - \log \alpha}) \, d\tau \\
        &\quad=
        \frac{32 \pi \epsilon^8}{3 \alpha^2}
        \int_{-\infty}^\infty
        H(\tau) w(\log 2\alpha - \tau) \, d\tau \\
        &\quad=
        \frac{32 \pi \epsilon^8}{3 \alpha^2}
        (H * w)(\log 2\alpha),
        \addtocounter{equation}{1}\tag{\theequation} \label{F-to-H}
    \end{align*}
    where $w(\tau) = \frac{12}{\pi} e^{-6\tau} m(e^{-(\tau-\log 2)})$ is as given by
    \labelcref{weight}.

    On the other hand, if $F_\mathrm{lin}^{\alpha, \epsilon}(t) \defeq F(t, {}\cdot{},
    u_\mathrm{lin}^{\alpha, \epsilon}(t))$, then
    \begin{align*}
        \int_{-\infty}^\infty
        \biginner{F_\mathrm{lin}^{\alpha, \epsilon}(t)}{
        u_\mathrm{lin}^{\alpha, \epsilon}(t)}_{L^2} \, dt
        &=
        \int_{-\infty}^\infty
        \biginner{F_\mathrm{lin}^{\alpha, \epsilon}(t)}{\partial_t v_\mathrm{lin}^{\alpha,
        \epsilon}(t)}_{L^2} \, dt \\
        &=
        \int_{-\infty}^\infty
        \Inner{\begin{bmatrix} 0 \\ F_\mathrm{lin}^{\alpha, \epsilon}(t)
        \end{bmatrix}}{\,\wprop(t) \begin{bmatrix} v_0^{\alpha, \epsilon} \\
        v_1^{\alpha, \epsilon} \end{bmatrix}}_{\dot{H}^1 \times L^2} \, dt \\
        &=
        \Inner{\int_{-\infty}^\infty \wprop(-t) 
        \begin{bmatrix} 0 \\ F_\mathrm{lin}^{\alpha, \epsilon}(t)
        \end{bmatrix} \, dt}{\begin{bmatrix} v_0^{\alpha, \epsilon} \\
        v_1^{\alpha, \epsilon} \end{bmatrix}}_{\dot{H}^1 \times L^2}.
        \intertext{It follows from \Cref{C:asymp} that agreement of the scattering operators
        implies that}
        \int_{-\infty}^\infty
        \biginner{F_\mathrm{lin}^{\alpha, \epsilon}(t)}{
        u_\mathrm{lin}^{\alpha, \epsilon}(t)}_{L^2} \, dt
        &=
        \int_{-\infty}^\infty
        \biginner{\tilde{F}_\mathrm{lin}^{\alpha, \epsilon}(t)}{
        u_\mathrm{lin}^{\alpha, \epsilon}(t)}_{L^2} \, dt
        \\
        &\hphantom{={}} +
        \bigO{\norm{(u_0^{\alpha, \epsilon}, u_1^{\alpha, \epsilon})}_{\dot{H}^1 \times
        L^2}^9} 
        \cdot
        \norm{(v_0^{\alpha, \epsilon}, v_1^{\alpha, \epsilon})}_{\dot{H}^1 \times L^2} \\
        &=
        \int_{-\infty}^\infty
        \biginner{\tilde{F}_\mathrm{lin}^{\alpha, \epsilon}(t)}{
        u_\mathrm{lin}^{\alpha, \epsilon}(t)}_{L^2} \, dt
        + \bigO[\alpha]{\epsilon^{12}}.
        \addtocounter{equation}{1}\tag{\theequation} \label{F-to-Ftilde}
    \end{align*}

    Now given a $\tau_0 \in \R$, let $\alpha \defeq \frac{1}{2} e^{\tau_0}$ so that $\tau_0
    = \log 2\alpha$. Combining \Cref{L:leb-diff}, \labelcref{F-to-H}, and
    \labelcref{F-to-Ftilde}, we deduce that
    \[
        (H * w)(\tau_0) = (\tilde{H} * w)(\tau_0) + \lilo{1} + \bigO{\epsilon^4}
        \quad\text{as $\epsilon\to 0$}.
    \]
    Taking $\epsilon \to 0$, we arrive at the conclusion.
\end{proof}

\begin{proof}[Proof of \Cref{L:leb-diff}]
    Fix a point $(t_0, x_0) \in \dpt{F}$ and
    let
    \[
        G^{\alpha, \epsilon}(t, x, u) 
        \defeq F(t_0 + (\alpha/\epsilon)^{-2} t, x_0 + (\alpha/\epsilon)^{-2} x, u) u
    \]
    so that
    \begin{align*}
        &\int_{-\infty}^\infty 
        \biginner{F(t, x, u_\mathrm{lin}^{\alpha, \epsilon}(t-t_0, x-x_0))}{
        u_\mathrm{lin}^{\alpha, \epsilon}(t-t_0, x-x_0)}_{L^2_x} \, dt \\
        &\quad=
        (\alpha/\epsilon)^{-8}
        \int_{-\infty}^\infty \int_{\R^3}
        G^{\alpha, \epsilon}(t, x, \alpha u_\mathrm{lin}(t, x)) \, dx \, dt.
    \end{align*}
    Then the conclusion sought can be written as follows: as $\epsilon\to 0$,
    \begin{equation} \label{gae-limit}
        \int_{-\infty}^\infty \int_{\R^3}
        G^{\alpha, \epsilon}(t, x, \alpha u_\mathrm{lin}(t, x))
        - G^{\alpha, \epsilon}(0, 0, \alpha u_\mathrm{lin}(t, x)) \, dx \, dt
        = \lilo[\alpha]{1}.
    \end{equation}

    To prove this, we first recall from the proof of \Cref{L:measures} that
    \[
        u_\mathrm{lin}(t, x)
        \leq
        \begin{cases*}
            2 \cdot 1_{\set{0 < \abs{x} < 2}}(t, x) & if $0 < t < 1$, \\
            \frac{1}{t} \cdot 1_{\set{t-1 \leq \abs{x} < t+1}}(t, x) & if $t > 1$.
        \end{cases*}
    \]
    Hence
    \begin{align*}
        \int_{-\infty}^{\infty} \int_{\R^3} \abs{u_\mathrm{lin}(t, x)}^6 \, dx \, dt
        &= 2 \int_0^\infty \int_{\R^3} |u_\mathrm{lin}(t, x)|^6 \, dx \, dt
        \les 1 + \int_1^\infty \left(\frac{1}{t}\right)^6 t^2 \, dt
        < \infty.
    \end{align*}
    Thus, given any $\eta > 0$, the dominated convergence theorem guarantees that there
    exists an $R > 0$ (depending on $\eta$) so that
    \begin{align}
        &\Abs{
        \iint_{\abs{t} + \abs{x} > R}
        G^{\alpha, \epsilon}(t, x, \alpha u_\mathrm{lin}(t, x)) -
        G^{\alpha, \epsilon}(0, 0, \alpha u_\mathrm{lin}(t, x)) \, dx \, dt} \notag \\
        &\qquad\qquad\les_\alpha
        \iint_{\abs{t} + \abs{x} > R} \abs{u_\mathrm{lin}(t, x)}^6 \, dx \, dt < \eta.
        \label{tx-large}
    \end{align}

    To estimate the integral in \labelcref{gae-limit} over the complementary region $\abs{t}
    + \abs{x} \leq R$, we partition it into the sets
    \[
        U_n^R \defeq \set{(t, x) \in \R\times \R^3 : \abs{t} + \abs{x} \leq R \text{ and }
        \lceil 2\alpha \rceil n/N
        \leq \alpha u_\mathrm{lin}(t, x) 
        < \lceil 2\alpha \rceil (n+1)/N},
    \]
    where $N$ is some large positive integer and $\abs{n} \leq N$.
    For $(t, x) \in U_n^R$, we then have
    \begin{align*}
        \abs{G^{\alpha, \epsilon}(t, x, \alpha u_\mathrm{lin}(t, x)) -
        G^{\alpha, \epsilon}(t, x, \lceil 2\alpha \rceil n/N)} 
        &\les_\alpha 1/N, \\
        \abs{G^{\alpha, \epsilon}(0, 0, \lceil 2\alpha \rceil n/N) - 
        G^{\alpha, \epsilon}(0, 0, \alpha u_\mathrm{lin}(t, x))} 
        &\les_\alpha 1/N,
    \end{align*}
    with implicit constants depending only on $\alpha$.  As $\lceil 2\alpha \rceil
    n/N\in\mathbb{Q}$, replacing the true values of $\alpha u_\mathrm{lin}$ with these
    approximations will allow us to exploit the hypothesis that $(t_0,x_0)$ is a
    determinable point.  To employ these approximations, we first note that
        \begin{align*}
        &\Abs{
        \iint_{\abs{t} + \abs{x} \leq R}
        G^{\alpha, \epsilon}(t, x, \alpha u_\mathrm{lin}(t, x)) -
        G^{\alpha, \epsilon}(0, 0, \alpha u_\mathrm{lin}(t, x)) \, dx \, dt} \\
        &\quad\les_\alpha
        \sum_{\abs{n} \leq N}
        \iint_{U_n^{\mkern-1mu R}}
        \Bigl|G^{\alpha, \epsilon}(t, x, \lceil 2\alpha \rceil n/N) -
        G^{\alpha, \epsilon}(0, 0, \lceil 2\alpha \rceil n/N)\Bigr| \, dx \, dt
        + \frac{R^4}{N}\,.
    \end{align*}
    For each $n$, a change of variables gives
    \begin{align*}
        &\iint_{U_n^R}
        \Bigl|G^{\alpha, \epsilon}(t, x, \lceil 2\alpha \rceil n/N) -
        G^{\alpha, \epsilon}(0, 0, \lceil 2\alpha \rceil n/N)\Bigr| \, dx \, dt \\
        &\ \les_{\alpha,R}
        \epsilon^{-8}\iint_{\abs{t-t_0}+\abs{x-x_0} \leq (\alpha/\epsilon)^{-2} R}
        \Bigl|F(t, x, \lceil 2\alpha \rceil n/N) -
        F(t_0, x_0, \lceil 2\alpha \rceil n/N)\Bigr| \, dx \, dt,
    \end{align*}
    which tends to zero as $\epsilon \to 0$ because $(t_0,x_0)$ is a determinable point.

    Therefore, choosing $N$ sufficiently large (depending on $\eta$) and then $\epsilon$
    sufficiently small (depending on $\eta$), we obtain
    \[
        \Abs{\iint_{\abs{t} + \abs{x} \leq R} 
        G^{\alpha, \epsilon}(t, x, \alpha u_\mathrm{lin}(t, x)) -
        G^{\alpha, \epsilon}(0, 0, \alpha u_\mathrm{lin}(t, x)) \, dx \, dt}
        \les_\alpha \eta.
    \]
    Combining this with \eqref{tx-large} and recalling that $\eta$ was arbitrary, we deduce
    \labelcref{gae-limit}.
\end{proof}

In view of Proposition~\ref{P:conv}, the proof of \Cref{T:main} reduces to showing that the
convolution equation \labelcref{conv-eqn} implies equality of the nonlinearities $F$ and
$\tilde{F}$.  We turn our attention to this task in the next section.

\section{Deconvolutional determination of the nonlinearity}\label{S:deconvolution}

The final step in the proof of \Cref{T:main} consists of formally ``deconvolving'' both
sides of equation~\eqref{conv-eqn} with $w$ to arrive at $H = \tilde{H}$. This in turn
implies that $F(t_0, x_0, {}\cdot{}) = \tilde{F}(t_0, x_0, {}\cdot{})$.  The tool that will
enable us to do so is a Tauberian theorem of Wiener \cite{Wiener}. For the following
formulation of the Tauberian theorem, as well as a very elegant proof, see Korevaar
\cite{Korevaar}.

\begin{theorem}[Wiener's Tauberian theorem] \label{T:wiener}
    Let $f \in L^1(\R)$ and $g \in L^\infty(\R)$. If $f * g = 0$ and $\hat{f}$ has no
    zeroes, then $g = 0$.
\end{theorem}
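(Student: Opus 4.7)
My plan is to reduce the statement to an algebraic density fact in the convolution algebra $L^1(\R)$, and then tackle that fact via local Fourier inversion. Set $J \defeq \set{h \in L^1(\R) : h*g = 0}$. The identity $\phi*(h*g) = (\phi*h)*g$, valid for $\phi \in L^1(\R)$, shows that $J$ is a closed convolution ideal of $L^1(\R)$, and it contains $f$ by hypothesis. If I can prove that the closed ideal $I_f$ generated by $f$ coincides with all of $L^1(\R)$, then $J = L^1(\R)$, so that $h_n * g = 0$ for any approximate identity $h_n \in L^1(\R)$. Testing against $\phi \in C_c(\R)$ and using that $\check h_n * \phi \to \phi$ in $L^1$ (where $\check h(x) \defeq h(-x)$), duality of convolution then gives $\int g \phi = 0$ for every such $\phi$, and hence $g = 0$ almost everywhere.

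The remaining task is thus to show $I_f = L^1(\R)$ whenever $\hat f$ has no zeroes. First I would verify that the subspace $V \subseteq L^1(\R)$ of functions with compactly supported Fourier transform is dense in $L^1(\R)$: if $\eta \in L^1$ satisfies $\int \eta = 1$ and has $\hat\eta$ compactly supported, then the rescalings $\eta_R(x) \defeq R\eta(Rx)$ form an approximate identity, and $\eta_R * \phi \in V$ converges to $\phi$ in $L^1$. So it suffices to prove $V \subseteq I_f$. For $\phi \in V$ with $\hat\phi$ supported in a compact set $K$, continuity of $\hat f$ combined with the no-zeroes hypothesis gives $|\hat f| \geq c > 0$ on $K$, so $\hat\phi / \hat f$ is a continuous function compactly supported in $K$. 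If I can produce $\psi \in L^1(\R)$ with $\hat\psi = \hat\phi/\hat f$, then $\psi * f = \phi$, placing $\phi \in I_f$. This $\psi$ would be assembled by covering $K$ by finitely many neighborhoods on each of which $1/\hat f$ is locally realized as the Fourier transform of some $L^1(\R)$ function, and then patching these local inverses via a smooth partition of unity subordinate to the cover.

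The main obstacle is this local inversion lemma: at each $\xi_0 \in \R$, there must exist $\psi_{\xi_0} \in L^1(\R)$ and $\delta > 0$ with $\hat\psi_{\xi_0}(\xi) = 1/\hat f(\xi)$ for $|\xi - \xi_0| < \delta$. The cleanest proof routes through Gelfand theory: the unitization of the convolution algebra $L^1(\R)$ is a commutative Banach algebra whose character space is the one-point compactification of $\R$ via Fourier transform evaluation, and the holomorphic functional calculus then supplies the local inverse. My preferred presentation, however, would follow the more elementary path sketched in Korevaar, which bypasses general Banach algebra machinery: an explicit compactly supported approximation to $1/\hat f$ near $\xi_0$ is refined by a convergent Neumann series argument exploiting the uniform continuity of $\hat f$ on compacta.
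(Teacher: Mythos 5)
The paper does not prove this theorem at all: it is quoted as a classical result, with the reader referred to Korevaar for ``a very elegant proof,'' so there is no in-paper argument to compare against. Your outline is the standard ideal-theoretic proof, and its reduction steps are all sound: $J \defeq \set{h \in L^1(\R) : h * g = 0}$ is indeed a closed convolution ideal (closedness follows from $\norm{h*g}_{L^\infty} \leq \norm{h}_{L^1} \norm{g}_{L^\infty}$), the passage from $I_f = L^1(\R)$ to $g = 0$ via an approximate identity and duality is correct, the density of the set $V$ of functions with compactly supported Fourier transform follows from a Fej\'er-type kernel as you say, and the patching step works because the members of a smooth compactly supported partition of unity $\set{\chi_j}$ are themselves Fourier transforms of $L^1$ functions, so that $\hat{\phi}/\hat{f} = \sum_j \chi_j \hat{\phi}\, \hat{\psi}_{\xi_j}$ is realized by $\sum_j c_j * \phi * \psi_{\xi_j}$ with $\hat{c_j} = \chi_j$.

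That said, as a proof your proposal is incomplete in exactly one place, and it is the place where essentially all of the content of Wiener's theorem lives: the local inversion lemma asserting that $1/\hat{f}$ agrees near each $\xi_0$ with the Fourier transform of an $L^1(\R)$ function. You defer this to Gelfand theory or to Korevaar's Neumann-series argument rather than proving it. If you carry out the elementary route, be aware that the quantity you must make small to start the Neumann series is the $L^1$ norm of $u - f * u$ for a suitable kernel $u$ with $\hat{u} = 1$ near $\xi_0$ (after modulating and normalizing so that $\xi_0 = 0$ and $\hat{f}(0) = 1$); the estimate hinges on writing $u - f*u$ as the average of $u - u({}\cdot{} - y)$ against $f(y)\,dy$ and invoking the continuity of translation in $L^1$, not merely the uniform continuity of $\hat{f}$ on compacta --- a function whose Fourier transform is small on a small set need not have small $L^1$ norm, so the continuity of $\hat f$ alone cannot close the argument. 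With that lemma supplied (it is Wiener's lemma, and both routes you name do work), your argument becomes a complete and correct proof.
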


\begin{proposition} \label{P:w-ft}
    Let $w$ be as defined in \Cref{P:conv}. Then $\hat{w}$ has no zeroes.
\end{proposition}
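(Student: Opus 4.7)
The plan is to show $\hat{w}$ is nonvanishing using an integration-by-parts identity combined with the strict triangle inequality for the Fourier transform of a sign-definite $L^1$ function. The starting point is the observation that $w$ has a jump discontinuity at the origin: $w(0^+) = 1 - \tfrac{4}{8} = \tfrac{1}{2}$, while $w(\tau) \to 0$ exponentially as $\tau \to \infty$. For $\xi \neq 0$, integration by parts on $(0, \infty)$ therefore gives
\begin{equation*}
    i\xi\, \hat{w}(\xi) = \tfrac{1}{2} + \int_0^\infty w'(\tau)\, e^{-i\xi\tau}\, d\tau.
\end{equation*}
At $\xi = 0$, positivity of $w$ on $(0,\infty)$---which reduces to $(1 + e^\tau)^3 > 4$ for $\tau > 0$---already gives $\hat{w}(0) = \int_0^\infty w > 0$.

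The key is then to show that $w$ is strictly decreasing on $(0,\infty)$. Granted this, one has $\norm{w'}_{L^1(0,\infty)} = w(0^+) - w(\infty) = \tfrac{1}{2}$, and the strict triangle inequality for the Fourier transform of a nonzero sign-definite $L^1$ function (valid whenever the support has positive Lebesgue measure) yields $\bigabs{\int_0^\infty w'(\tau)\, e^{-i\xi\tau}\, d\tau} < \tfrac{1}{2}$ for all $\xi \neq 0$. This forces $i\xi\, \hat{w}(\xi)$ to lie in the open disk of radius $\tfrac{1}{2}$ centred at $\tfrac{1}{2}$, hence to be nonzero, completing the proof.

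For the monotonicity, I would substitute $x \defeq e^{-\tau} \in (0,1)$ and expand $(x+1)^3$ to rewrite
\begin{equation*}
    w(\tau) = \tilde{w}(x) \defeq \frac{x^3\, p(x)}{(x+1)^3}, \qquad p(x) \defeq -3x^3 + 3x^2 + 3x + 1.
\end{equation*}
Since $\tau \mapsto x$ reverses orientation, it suffices to show that $\tilde{w}$ is strictly increasing on $(0,1)$. A logarithmic derivative reduces this to
\begin{equation*}
    \frac{\tilde{w}'(x)}{\tilde{w}(x)} = \frac{3}{x(x+1)} + \frac{p'(x)}{p(x)} > 0 \quad \text{on } (0,1).
\end{equation*}
Both terms are manifestly positive, using $p'(x) = 3(1-x)(3x+1) > 0$ on $(0,1)$ together with $p(0) = 1$ (whence $p > 0$ throughout $[0,1]$). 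The main obstacle is this monotonicity step, but the exponential substitution reduces it to an elementary polynomial inequality on $[0,1]$; the elegance of the integration-by-parts approach is that the jump $w(0^+) = \tfrac{1}{2}$ exactly matches $\norm{w'}_{L^1}$, so the two contributions to $i\xi\, \hat{w}(\xi)$ cannot cancel for any $\xi \neq 0$.
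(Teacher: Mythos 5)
Your proof is correct, but it follows a genuinely different route from the paper's. The paper splits $w = w_0 + w_1$ with $w_0(\tau) = \tfrac12 e^{-3\tau}1_{(0,\infty)}(\tau)$, computes $\hat{w_0}$ in closed form, and then beats $\hat{w_1}$ by two different bounds (an $L^1$ bound for $\abs{\xi}^2 < 27$ and a single integration by parts giving $\abs{\hat{w_1}(\xi)} \leq \tfrac{1}{3\abs{\xi}}$ for $\abs{\xi}^2 > \tfrac{36}{5}$), checking that the two regimes overlap. You instead integrate by parts on all of $w$ at once and exploit a structural fact the paper never records: $w$ is strictly decreasing on $(0,\infty)$, so that $\norm{w'}_{L^1}$ equals the jump $w(0^+) = \tfrac12$ exactly, and the equality case of the triangle inequality (constant phase of $w'(\tau)e^{-i\xi\tau}$ on a set of positive measure, impossible for $\xi \neq 0$) rules out cancellation. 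Your reduction of the monotonicity to the positivity of $\tfrac{3}{x(x+1)} + \tfrac{p'(x)}{p(x)}$ on $(0,1)$, with $p(x) = -3x^3+3x^2+3x+1$, $p'(x)=3(1-x)(3x+1)$, checks out (and $p>0$ on $[0,1]$ justifies the logarithmic derivative), as does the separate observation $\hat{w}(0) = \int_0^\infty w > 0$ from $(1+e^\tau)^3 > 4$. What your approach buys is a cleaner, case-free argument that also reveals monotonicity of the distribution-derived weight; what the paper's buys is robustness, since it only needs crude size estimates on the two pieces rather than an exact matching of the boundary term with the total variation. One point worth making explicit in a final write-up is the justification of the integration by parts itself (absolute continuity of $w$ on $(0,\infty)$ with $w' \in L^1$ and $w(\tau)\to 0$), but this is immediate from the monotonicity and boundedness you establish.
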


Assuming that this proposition holds (so that Wiener's Tauberian theorem is
applicable to $w$), \Cref{T:main} follows immediately, as we demonstrate next.

\begin{proof}[Proof of \Cref{T:main}]
    Fix a point $(t_0, x_0) \in \dpt{F} \cap \dpt{\tilde{F}}$ and define $H$ and $\tilde{H}$
    as in \Cref{P:conv} so that $(H - \tilde{H}) * w = 0$. It follows from
    Theorem~\ref{T:wiener} and \Cref{P:w-ft} that $H = \tilde{H}$. In particular,
    \begin{align*} 
        \tfrac{d}{d\tau} \bigl[ e^\tau F(t_0,x_0,e^\tau) - e^\tau 
    \tilde F(t_0,x_0,e^\tau)\bigr] =0, 
    \end{align*} 
    from which it follows that $F(t_0, x_0, {}\cdot{}) = \tilde{F}(t_0, x_0, {}\cdot{})$.
\end{proof}

\begin{proof}[Proof of \Cref{P:w-ft}]
    We decompose $w$ as $w = w_0 + w_1$, where
    \begin{align*}
        w_0(\tau) 
        &\defeq \left( \frac{e^{-3\tau}}{2\vphantom{(e^{-\tau} + 1)^3}} \right) 
        1_{(0, \infty)}(\tau), \\
        w_1(\tau) 
        &\defeq \left( \frac{e^{-3\tau}}{2\vphantom{(e^{-\tau} + 1)^3}}
        -\frac{4e^{-6\tau}}{(e^{-\tau} + 1)^3} \right) 1_{(0, \infty)}(\tau).
    \end{align*}

    First, we compute that
    \begin{equation} \label{w0-expr}
        \hat{w_0}(\xi)
        = \int_0^\infty \frac{e^{-3\tau}}{2} \cdot e^{-i\xi\tau} \, d\tau
        = \frac{1}{6 + 2i\xi}\,.
    \end{equation}
    As $8e^{-3\tau} \leq (e^{-\tau} + 1)^3 \leq 8$ for all $\tau \in (0, \infty)$, 
    we also have $w_1(\tau) \geq 0$ and so
    \[
        \abs{\hat{w_1}(\xi)}
        \leq \int_0^\infty \frac{e^{-3\tau} - e^{-6\tau}}{2} \, d\tau
        = \frac{1}{12}\,.
    \]
    Using the expression \labelcref{w0-expr} for $\hat{w_0}(\xi)$, we find that
    $\abs{\hat{w_0}(\xi)} > \frac{1}{12}$ whenever $\abs{\xi}^2 < 27$, 
    which implies that $\abs{\hat{w}(\xi)} > 0$ for all such $\xi$.

    To handle the remaining $\xi$, we integrate by parts to obtain
    \begin{align*}
        \hat{w_1}(\xi)
        &= \int_0^\infty \frac{d}{d\tau\vphantom{(e^{-\tau} + 1)^3}} 
        \left( \frac{e^{-3\tau}}{2\vphantom{(e^{-\tau} + 1)^3}}
        -\frac{4e^{-6\tau}}{(e^{-\tau} + 1)^3} \right) 
        \frac{e^{-i\xi\tau}}{i\xi\vphantom{(e^{-\tau} + 1)^3}} \, d\tau.
    \end{align*}
    It is straightforward to verify that
    \[
        A(\tau) 
        \defeq - \frac{d}{d\tau\vphantom{(e^{-\tau} + 1)^3}} 
        \left( \frac{e^{-3\tau}}{2\vphantom{(e^{-\tau} + 1)^3}} \right)
        \quad \text{and} \quad
        B(\tau) 
        \defeq \frac{d}{d\tau\vphantom{(e^{-\tau} + 1)^3}} 
        \left( - \frac{4e^{-6\tau}}{(e^{-\tau} + 1)^3} \right) 
    \]
    satisfy $0 \leq \frac{2}{3} B(\tau) \leq A(\tau)$ for all $\tau \in (0, \infty)$. Hence
    \[
        \abs{\hat{w_1}(\xi)}
        \leq \frac{1}{\abs{\xi}} \int_0^\infty \abs{B(\tau) - A(\tau)} \, d\tau
        \leq \frac{1}{\abs{\xi}} \int_0^\infty A(\tau) - \frac{1}{3} B(\tau) \, d\tau
        = \frac{1}{3\abs{\xi}}\,.
    \]
    Using the expression \labelcref{w0-expr} for $\hat{w_0}(\xi)$ again, we find that
    $\abs{\hat{w_0}(\xi)} > \frac{1}{3\abs{\xi}}$ whenever $\abs{\xi}^2 > \frac{36}{5}$, 
    which implies that $\abs{\hat{w}(\xi)} > 0$ for all such $\xi$.
    
    As $\frac{36}5<27$, we conclude that $\hat{w}(\xi) \neq 0$ for all $\xi \in \R$, as was
    to be shown.
\end{proof}

\emergencystretch=1em
\printbibliography

\end{document}